	\renewcommand{\geq}{\geqslant}
	\renewcommand{\leq}{\leqslant}
	\renewcommand{\phi}{\varphi}
	\providecommand{\corollaryname}{Corollary}
	\providecommand{\definitionname}{Definition}
	\providecommand{\examplename}{Example}
	\providecommand{\lemmaname}{Lemma}
	\providecommand{\notationname}{Notation}
	\providecommand{\propositionname}{Proposition}
	\providecommand{\remarkname}{Remark}
	\providecommand{\theoremname}{Theorem}
	\providecommand{\setupname}{Setup}
	\providecommand{\conjecturename}{Conjecture}
	\providecommand{\questionname}{Question}
	\providecommand{\claimname}{Claim}
	\theoremstyle{plain}
		\newtheorem{thm}{\protect\theoremname}[section] 
		\newtheorem{prop}[thm]{\protect\propositionname}
		\newtheorem{lem}[thm]{\protect\lemmaname}
		\newtheorem{cor}[thm]{\protect\corollaryname}
	\theoremstyle{definition}
		\newtheorem{defn}[thm]{\protect\definitionname}
		\newtheorem{example}[thm]{\protect\examplename}
		\newtheorem{setup}[thm]{\setupname}
	\theoremstyle{remark}
		\newtheorem{rem}[thm]{\protect\remarkname}
	\numberwithin{figure}{section}
	\numberwithin{equation}{section}
	\tikzset{commutative diagrams/.cd, 
		mysymbol/.style = {start anchor=center, end anchor = center, draw = none}}
	\newcommand{\commutes}[2][\circlearrowleft]{\arrow[mysymbol]{#2}[description]{#1}}
	\newcommand{\BZ}{\mathbb{Z}}
	\newcommand{\CA}{\mathcal{A}}
	\newcommand{\CB}{\mathcal{B}}
		\newcommand{\ring}{k}
		\newcommand{\sse}{\subseteq}
		\newcommand{\binprod}{\mathbin\Pi}
		\newcommand{\bincoprod}{\mathbin\amalg}
		\newcommand{\Ker}{\operatorname{Ker}\nolimits}
		\newcommand{\Cok}{\operatorname{Coker}\nolimits}
		\newcommand{\iso}{\cong}
		\newcommand{\Hom}{\operatorname{Hom}\nolimits}
		\newcommand{\End}{\operatorname{End}\nolimits}
		\newcommand{\into}{\hookrightarrow}
		\newcommand{\onto}{\rightarrow\mathrel{\mkern-14mu}\rightarrow}
		\newcommand{\cok}{\operatorname{coker}\nolimits}
		\newcommand{\id}[1]
								{\mathrm{id}{_{#1}}}
		\newcommand{\rMod}[1]{\operatorname{\mathsf{Mod}}\nolimits{#1}}
		\newcommand{\rproj}[1]{\operatorname{\mathsf{proj}}\nolimits{#1}}
		\newcommand{\add}[1]{\operatorname{\mathsf{add}}\nolimits{#1}}
	\newcommand{\sym}{\operatorname{Sym}\nolimits} 
	\newcommand{\deff}{\coloneqq}
	\newcommand\restr[2]{{\left.\kern-\nulldelimiterspace#1
						\right|_{#2}}}
	\renewcommand{\andify}{%
		\nxandlist{\unskip, }{\unskip{} \@@and~}{\unskip \penalty-2 \space \@@and~}}
	\renewcommand\author@andify{%
  		\nxandlist {\unskip ,\penalty-1 \space\ignorespaces}%
		{\unskip {} \@@and~}%
		{\unskip \penalty-2 \space \@@and~}
	}
	    \newenvironment{acknowledgements}{%
	    \renewcommand\abstractname{Acknowledgements}
\global\setbox\abstractbox=\vtop \bgroup
\normalfont\Small
\list{}{\labelwidth\z@
\leftmargin3pc \rightmargin\leftmargin
\listparindent\normalparindent \itemindent\z@
\parsep\z@ \@plus\p@

}%
\item[\hskip\labelsep\scshape\abstractname.]%
}{%
\endlist\egroup
\ifx\@setabstract\relax \@setabstracta \fi
}
\def\@setaddresses{\par
    \nobreak \begingroup
    \setstretch{0.5} 
    \footnotesize
    \def\author##1{\nobreak\addvspace\bigskipamount}%
    \def\\{\unskip, \ignorespaces}%
    \interlinepenalty\@M
    \def\address##1##2{\begingroup
        \par\addvspace\bigskipamount\indent
    \@ifnotempty{##1}{(\ignorespaces##1\unskip) }%
    {\scshape\ignorespaces##2}\par\endgroup}%
    \def\curraddr##1##2{\begingroup
    \@ifnotempty{##2}{\nobreak\indent\curraddrname
    \@ifnotempty{##1}{, \ignorespaces##1\unskip}\/:\space
    ##2\par}\endgroup}%
    \def\email##1##2{\begingroup
    \@ifnotempty{##2}{\nobreak\indent\emailaddrname
    \@ifnotempty{##1}{, \ignorespaces##1\unskip}\/:\space
    \ttfamily##2\par}\endgroup}%
    \def\urladdr##1##2{\begingroup
    \def~{\char'\~}%
    \@ifnotempty{##2}{\nobreak\indent\urladdrname
    \@ifnotempty{##1}{, \ignorespaces##1\unskip}\/:\space
    \ttfamily##2\par}\endgroup}%
    \addresses
    \endgroup
}
\let\oldtocsection=\tocsection
\let\oldtocsubsection=\tocsubsection
\renewcommand{\tocsection}[2]{\vspace*{0pt}\hspace{0em}\oldtocsection{#1}{#2}}
    \renewcommand{\tocsubsection}[2]{\vspace*{0pt}\hspace{21pt}\oldtocsubsection{#1}{#2}}
\let\amph\&
\begin{document}

\title{Krull-Remak-Schmidt decompositions in {H}om-finite additive categories}
\author{Amit Shah}
	\address{
		Department of Mathematics\\
		Aarhus University\\
		8000 Aarhus C\\
		Denmark
	}
    \email{amit.shah@math.au.dk}
\date{\today}
\keywords{%
Additive category, 
bi-chain condition, 
\(\Hom\)-finite category, 
idempotent, 
Krull-Remak-Schmidt decomposition, 
Krull-Schmidt category, 
split idempotents, 
subobject}
\subjclass[2020]{Primary 18E05; Secondary 16D70, 16L30, 16U40, 18E10}
\dedicatory{Dedicated to Robert E.\ Remak (1888--1942) in honour of his contributions to mathematics.}
%
{\setstretch{1}\begin{abstract}
An additive category in which each object has a Krull-Remak-Schmidt decomposition---that is, a finite direct sum decomposition consisting of objects with local endomorphism rings---is known as a Krull-Schmidt category. A $\Hom$-finite category is an additive category $\mathcal{A}$ for which there is a commutative unital ring $k$, such that each $\Hom$-set in $\mathcal{A}$ is a finite length $k$-module. The aim of this note is to provide a proof that a $\Hom$-finite category is Krull-Schmidt, if and only if it has split idempotents, if and only if each indecomposable object has a local endomorphism ring. 
\end{abstract}}
\maketitle

\section{Introduction}
\label{sec:introduction}

If one blurts out ``decomposition theorem'' to an undergraduate in mathematics, one might expect them to think of the fundamental theorem of arithmetic or perhaps the fundamental theorem of finitely generated abelian groups. Such results are of interest (and importance) because we can hope to understand a more complicated object by first understanding the simpler components of which it is comprised. 
It is this kind of application that has made another famous decomposition theorem of such wide interest. 
Let $\CA$ be an additive category. 
An object $X$ is said to satisfy the \emph{Krull-Remak-Schmidt theorem} if, whenever $M_{1} \oplus \cdots \oplus M_{m}$ and $N_{1} \oplus \cdots \oplus N_{n}$ are finite direct sum decompositions of $X$ into objects each having local endomorphism rings, then $m=n$ and there is a permutation $\sigma$ in $\sym(n)$ such that $M_{j}$ is isomorphic to $N_{\sigma(j)}$ for $j=1,\ldots, n$. 

The Krull-Remak-Schmidt theorem has its roots in finite group theory: 
first Frobenius--Stickel\-berger \cite{FrobeniusStickelberger-Uber-gruppen-von-vertauschbaren-elementen} demonstrated it for finite abelian groups; 
Remak \cite{Remak-Uber-die-zerlegung-der-endlichen-gruppen-in-direkte-unzerlegbare-faktoren} for finite groups\footnote{Remak proved a stronger conclusion: $m=n$, and there exists $\sigma\in\sym(n)$ and an automorphism $f\colon X\to X$ that is the identity on $X$ modulo its centre, such that $M_{j}\iso N_{\sigma(j)}$ via $f$ for $1\leq j \leq n$. Wedderburn \cite{Maclagan-Wedderburn-On-the-direct-product-in-the-theory-of-finite-groups} proposed a proof for finite groups slightly before Remak, but with no central isomorphism aspect. Moreover, it is not clear if Wedderburn's proof is complete, with Remak having commented on deficiencies in the proof at the end of \cite{Remak-Uber-die-zerlegung-der-endlichen-gruppen-in-direkte-unzerlegbare-faktoren}.};  
Schmidt \cite{Schmidt-Sur-les-produits-directs} also for finite groups but with a  substantially shorter proof; 
and 
Krull \cite{Krull-uber-verallgemeinerte-endliche-abelsche-gruppen} for abelian operator groups (usually stated in the language of modules) with ascending and descending chain conditions. 
The first categorical version of the theorem was established by Atiyah \cite{Atiyah-KS-theorem-with-apps-to-sheaves}. 
For a nice introduction on the Krull-Remak-Schmidt theorem for module categories see 
Facchini \cite{Facchini-the-KS-theorem}, 
and for additive categories see 
Walker--Warfield \cite{WalkerWarfield-Unique-decomposition-and-isomorphic-refinement-theorems-in-additive-categories}. 

A finite direct sum decomposition of an object in $\CA$ into objects having local endomorphism rings is known as a \emph{Krull-Remak-Schmidt decomposition}. 
If each object in $\CA$ admits such a decomposition, then $\CA$ is known as a \emph{Krull-Schmidt category}. 
If there is a commutative unital ring $\ring$ such that each $\Hom$-set in $\CA$ is a finite length $\ring$-module, then we call $\CA$ \emph{$\Hom$-finite}. 
The purpose of this note is to show that if $\CA$ is $\Hom$-finite, 
then $\CA$ is Krull-Schmidt, if and only if it is idempotent complete, if and only if the endomorphism ring of any indecomposable object in $\CA$ is local (see \cref{thm:main-theorem}). 
The equivalence of the first two conditions follows from 
the theory of projective covers; see Chen--Ye--Zhang \cite[\S A.1]{ChenYeZhang-Algebras-of-derived-dimension-zero}, 
Krause \cite[\S 4]{Krause-KS-cats-and-projective-covers}, or 
\cref{cor:Kr15-Cor-4-4-KS-category-iff-split-idems-and-semi-perfect-endo-rings}. 
The motivation for this note is the equivalence of the latter two conditions. 
Although this is certainly known when $k$ is a field 
(see e.g.\ 
Ringel \cite[\S 2.2]{Ringel-tame-algebras-and-integral-quadratic-forms}, 
Happel \cite[\S I.3.2]{Happel-triangulated-cats-in-rep-theory}, 
Chen--Ye--Zhang \cite[Cor.\ A.2]{ChenYeZhang-Algebras-of-derived-dimension-zero}), 
it remains true when $k$ is any commutative unital ring. 
However, the author failed to find a proof in the literature. 

We assume the reader is familiar with the theory of modules and the notions of a category and a functor. In Sections~\ref{sec:Krull-Schmidt-categories-semi-perfect-rings} and \ref{sec:subobjects-bichain-Hom-finite} we rely on several results from \cite{Krause-KS-cats-and-projective-covers}, which we typically do not reprove here. As such, this note is not self-contained. However, by keeping \cite{Krause-KS-cats-and-projective-covers} to hand the reader should not struggle. 
	
This article is organised as follows. 
	We recall some concepts from category theory in \cref{sec:categories}. 
	\cref{sec:idempotents} contains some preliminaries on idempotents and local rings. 
	In \cref{sec:Krull-Schmidt-categories-semi-perfect-rings} we turn to Krull-Schmidt categories and the relation to semi-perfect rings. 
	We recall some material on bi-chain conditions in abelian categories in \cref{sec:subobjects-bichain-Hom-finite}. 
	Lastly, we demonstrate the main result---\cref{thm:main-theorem}---in \cref{sec:main-theorem}.

\begin{rem}
It is not so clear why the terminology `Krull-Schmidt category' fails to include Remak's name. 
It perhaps originates from 
D\"{u}r \cite{Dur-Mobius-functions-incidence-algebras-and-power-series-representations}, in which D\"{u}r mentions \cite{Atiyah-KS-theorem-with-apps-to-sheaves} for influence on the choice of terminology. 
Atiyah referred only to the `Krull-Schmidt theorem' and to a `Remak decomposition'. 
Although this terminology is now entrenched in our mathematical language, we ought to remember that the statement Remak proved for finite groups is the assertion Schmidt proved in a shorter way and then Krull generalised. 

Remak demonstrated this significant result in his PhD thesis in 1911. 
His contributions to mathematics covered a wide range of areas, including group theory, number theory and analysis. 
Remak was murdered in Auschwitz in or after 1942 \cite[p.\ 64]{Segal-Mathematicians-under-the-nazis}. 
A nice biography of Remak is given by Merzbach \cite{Merzbach-Robert-Remak-and-the-estimation-of-units-and-regulators}. 
\end{rem}


\section{Additive and abelian categories}
\label{sec:categories}

The notions we recall here are standard. We mainly use this section to set up notation for the remainder of the article. An accessible introduction to these concepts can be found in Aluffi \cite[Chs.\ I, IX]{Aluffi-Chapter0}. The reader who is more familiar with category theory can safely skip this section.

Suppose \(\CA\) is a category and let $X,Y$ be objects in $\CA$.
We denote the collection of morphisms \(X\to Y\) in \(\CA\) by \(\Hom_{\CA}(X,Y)\). 
The collection of \emph{endomorphisms $f\colon X\to X$} of $X$ are denoted \(\End_{\CA}(X)\).
A \emph{zero object} in $\CA$ is an object $X\in\CA$ for which $\Hom_{\CA}(X,Y)$ and $\Hom_{\CA}(Y,X)$ are both singletons for each $Y\in\CA$.

\begin{defn}
For $X,Y\in\CA$, a \emph{coproduct} of $X$ and $Y$ is an object
$X\bincoprod Y$ in $\CA$ endowed with morphisms $i_{X}\colon X\to X\bincoprod Y$
and $i_{Y}\colon Y\to X\bincoprod Y$ satisfying the following universal
property: given $Z\in\CA$ and morphisms $f_{X}\colon X\to Z$, $f_{Y}\colon Y\to Z$,
there exists a unique morphism $g \colon X\bincoprod Y\to Z$ such that the diagram below commutes in $\CA$.
\[
\begin{tikzcd}
X \arrow{dr}[swap]{i_{X}} \arrow[bend left]{drr}{f_{X}} \commutes{drr}& & \\
 & X \bincoprod Y \arrow[dotted]{r}{\exists ! g} & Z \\
Y \arrow{ur}{i_{Y}} \arrow[bend right]{urr}[swap]{f_{Y}} \commutes{urr} & &
\end{tikzcd}
\]

A \emph{product} $X\binprod Y$ of $X$ and $Y$ is the dual notion, and we omit the description here.
\end{defn}

As with all objects defined by a universal property of this kind, (co)products and zero objects (where they exist) are unique up to unique isomorphism. 
Furthermore, although we only defined binary (co)products above, one can define finite (co)products similarly. 
We are now in a position to define an additive category.

\begin{defn}
\label{def:additive cat}
The category $\CA$ is called \emph{preadditive} if 
\begin{enumerate}[label=\textup{(\roman*)}]
	\item $\Hom_{\CA}(X,Y)$ has the structure of an abelian group for all $X,Y$ in $\CA$, such that the composition function $\Hom_{\CA}(X,Y) \times \Hom_{\CA}(Y,Z) \to \Hom_{\CA}(X,Z)$ 
	(sending $(f,g)$ to $gf = g\circ f$) 
	is $\BZ$-bilinear for all objects $X,Y,Z$ in $\CA$. 
\end{enumerate}
The category $\CA$ is \emph{additive} if it is preadditive and it has 
\begin{enumerate}[label=\textup{(\roman*)}]
\setcounter{enumi}{1}
	\item a zero object, which we denote by \(0\), and 
	\item finite products and finite coproducts. 
\end{enumerate}
\end{defn}

For the rest of this section suppose that \(\CA\) is an additive category.

\begin{rem}
\label{rem:on-the-definition-of-additive-category}
Let \(X,Y\in\CA\) be objects. 
\begin{enumerate}[label=\textup{(\roman*)}]	
	\item We denote the abelian group operation of $\Hom_{\CA}(X,Y)$ by $+$, and the identity element by $0$. 
		
	\item The product $X\binprod Y$ and coproduct $X\bincoprod Y$ are isomorphic;  
	see Mac Lane \cite[Exer.\ VIII.2.1]{MacLane-categories-for-the-working-mathematician}. 
	This object 
	is denoted $X\oplus Y$ and called the \emph{direct sum} of \(X\) and \(Y\). 
	In particular, it is equipped with morphisms
	\(i_{X}\colon X \to X\oplus Y\), 
	\(i_{Y}\colon Y \to X\oplus Y\), 
	\(p_{X}\colon X\oplus Y \to X\) and 
	\(p_{Y}\colon X\oplus Y \to Y\), 
	such that 
	\(p_{X}i_{X} = \id{X}\),
	\(p_{Y}i_{Y} = \id{Y}\) and 
	\(i_{X}p_{X} + i_{Y}p_{Y} = \id{X\oplus Y}\). 
	These equations also imply that \(p_{X}i_{Y} = 0\) and \(p_{Y}i_{X} = 0\). 

\end{enumerate}
\end{rem}

Later we deal with additive categories that have extra structure on their $\Hom$-sets. 
The following notion captures this. 
We always assume rings are associative and unital. 
However, we do not necessarily assume the additive identity \(0\) and the multiplicative identity \(1\) of a ring are distinct.

\begin{defn}
\label{def:R-linear-category}
Let $\ring$ be a commutative ring. 
The category $\CA$ is called a \emph{$\ring$-linear} category if 
$\Hom_{\CA}(X,Y)$ is a $\ring$-module for all objects $X,Y$ in $\CA$ and the composition of morphisms is $\ring$-bilinear.
\end{defn}

\begin{example}
\label{example:R-linear-categories}
\begin{enumerate}[label=\textup{(\roman*)}]
	\item 
	Recall that an abelian group is nothing other than a $\BZ$-module. 
	Thus, a category that is additive in the sense of \cref{def:additive cat} is just a $\BZ$-linear category in the sense of \cref{def:R-linear-category}.
	
	\item\label{item:R-Mod-is-R-linear}
	 Let $\Lambda$ be a ring. 
	We denote by $\rMod{\Lambda}$ the category of all right $\Lambda$-modules. 
	If \(\Lambda = \ring\) is commutative, then $\rMod{\ring}$ is $\ring$-linear. 
	See e.g.\ \cite[Chp.\ III]{Aluffi-Chapter0} for details. 
\end{enumerate}
\end{example}

Lastly we recall the concepts of (co)kernels and abelian categories.

\begin{defn}
\label{def:weak-co-kernel}
Let \(f\colon X\to Y\) be a morphism in \(\CA\).
A \emph{weak kernel} of \(f\) is a morphism \(i\colon K\to X\)
in \(\CA\) with \(fi=0\), and such that: 
for any \(a\colon A\to X\) with \(fa=0\) there
exists \(b\colon A\to K\) 
such that
\(a=ib\). 
\[
\begin{tikzcd}
	& A \arrow{d}{a}\arrow[dotted]{dl}[swap]{\exists b}& \\
K \arrow{r}{i}& X \arrow{r}{f}& Y
\end{tikzcd}
\]
If the morphism \(b\) obtained in this way is always uniquely determined, 
then \(i\colon K \to X\) is called a \emph{kernel} of \(f\). 

One defines a \emph{weak cokernel} and a \emph{cokernel} of \(f\) dually. 
\end{defn}

Recall that a morphism \(f\colon X\to Y\) in the additive category \(\CA\) is \emph{monic} (or a \emph{monomorphism}) 
if for any morphism \(a\colon A \to X\) with \(fa=0\) we must have that \(a=0\). 
An \emph{epic} morphism (or \emph{epimorphism}) is defined dually.

\begin{defn}
\label{def:abelian-category}
An additive category $\CA$ is said to be \emph{abelian} if: 
\begin{enumerate}[(i)]
\item each morphism \(f\colon X\to Y\) in $\CA$ has a kernel \(\ker f\colon \Ker f\to X \) and a cokernel \(\cok f\colon Y \to \Cok f\); and
\item in $\CA$ every monomorphism is the kernel of some morphism, and every epimorphism is the cokernel of some morphism.
\end{enumerate}
\end{defn}

\begin{example}
\label{example:R-Mod-is-R-linear-abelian}
If $\ring$ is a commutative ring, then the category $\rMod{\ring}$ 
(see \cref{example:R-linear-categories}\ref{item:R-Mod-is-R-linear}) 
is an abelian \(\ring\)-linear category.
\end{example}

We conclude this section with the following straightforward lemma.

\begin{lem}
\label{lem:monic-weak-kernel-is-kernel-and-dual}
A morphism $f$ in $\CA$ is a kernel if and only if it is a monic weak kernel. 
It is a cokernel if and only if it is an epic weak cokernel.
\end{lem}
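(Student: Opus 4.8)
The plan is to prove both equivalences by directly unwinding the definitions, using nothing beyond the additive structure on the $\Hom$-groups; the statement about cokernels then follows by dualising, so I would only spell out the kernel case and then pass to the opposite category $\CA^{\op}$ (which is again additive) at the end.

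For the forward implication, suppose $f\colon K\to X$ is a kernel, say $f = \ker g$ for some $g\colon X\to Y$. By \cref{def:weak-co-kernel} a kernel is in particular a weak kernel of $g$, so it only remains to check that $f$ is monic. Given $a\colon A\to K$ with $fa = 0$, I would apply the uniqueness clause of the kernel property to the morphism $fa = 0\colon A\to X$: it satisfies $g(fa) = (gf)a = 0$, and both $a$ and the zero morphism $0\colon A\to K$ are factorisations of $fa$ through $f$ (since $f\cdot 0 = 0 = fa$). Uniqueness forces $a = 0$, so $f$ is monic.

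For the converse, suppose $f\colon K\to X$ is a monic weak kernel, say a weak kernel of $g\colon X\to Y$. Then $gf = 0$ and every $a\colon A\to X$ with $ga = 0$ factors as $a = fb$ for some $b\colon A\to K$; I only need to see this $b$ is unique. If $fb = fb'$, then $f(b - b') = 0$ by $\BZ$-bilinearity of composition, and monicity of $f$ gives $b - b' = 0$, i.e. $b = b'$. Hence $f$ is a kernel of $g$. Finally, the cokernel equivalence is exactly the kernel equivalence applied in $\CA^{\op}$, since an epic weak cokernel in $\CA$ is precisely a monic weak kernel in $\CA^{\op}$ and a cokernel in $\CA$ is a kernel in $\CA^{\op}$.

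There is no real obstacle here: the only point worth noting is that in the converse we implicitly use that, in an additive category, the definition of monic given before \cref{def:abelian-category} (``$fa = 0$ implies $a = 0$'') is equivalent to the cancellative formulation ``$fb = fb'$ implies $b = b'$''; this is immediate from additivity by replacing $b - b'$ with a single morphism, but I would state it explicitly so the argument is airtight.
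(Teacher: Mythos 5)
Your proof is correct and is exactly the routine unwinding of definitions that the paper has in mind (the lemma is stated there without proof as ``straightforward''). In particular, you rightly flag that the paper's definition of monic via $fa=0\Rightarrow a=0$ is the one needed for the uniqueness step, and the passage to $\CA^{\op}$ for the cokernel statement is standard.
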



\section{Idempotents}
\label{sec:idempotents}

We will see in the next section that an additive category having Krull-Remak-Schmidt decompositions is very closely related to it having so-called split idempotents. 
However, more generally, one can define idempotents in any ring and we begin with such considerations now.

\subsection{Idempotents in rings}

Let \(\Lambda\) be a ring with multiplicative identity $1 = 1_{\Lambda}$.

\begin{defn} 
\begin{enumerate}[label=\textup{(\roman*)}]
	\item An element \(e\in\Lambda\) is called an \emph{idempotent} if \(e^{2}=e\). 

	\item Two idempotents \(e,f\in\Lambda\) are called \emph{orthogonal} if \(ef=0=fe\). A set \(\{e_{j}\}_{j=1}^{n}\sse \Lambda\) of idempotents is called  \emph{orthogonal} if 
	its elements are pairwise orthogonal. 
	
	\item An idempotent \(e\in\Lambda\) is called \emph{primitive} if 
	\(e=f+g\) implies \(f=0\) or \(g=0\) 
	for all orthogonal idempotents \(f,g\in\Lambda\).
	
	\item A set \(\{e_{j}\}_{j=1}^{n}\sse \Lambda\) of idempotents is called \emph{complete} if \(e_{1}+\cdots+e_{n}=1\). 
	
\end{enumerate}
\end{defn}

\begin{example}
\label{example:idempotents}
\begin{enumerate}[label=\textup{(\roman*)}]
	\item Let \(\CA\) be an additive category and suppose \(X\in\CA\) is an object. 
	The identity morphism \(\id{X}\colon X\to X\) and the zero morphism \(0\colon X\to X\) are always idempotents of the endomorphism ring 
	\(
	\End_{\CA}(X)
			= \Hom_{\CA}(X,X)
	\). 
	
	\item\label{item:1-e-is-idempotent} Given any idempotent \(e\in\Lambda\), 
	the element \(1-e\in\Lambda\) is also idempotent, and 
	\(e\) and \(1-e\) are orthogonal. 
	Furthermore, the right $\Lambda$-module $\Lambda_{\Lambda}$ decomposes as $\Lambda = e\Lambda \oplus (1-e)\Lambda$. 
\end{enumerate}
\end{example}

Lastly in this subsection, we recall the definition of a local ring, and its connection with idempotents and indecomposable modules.

\begin{defn}
\label{def:local-ring}
If \(0\neq 1\) and the sum of any two non-units in $\Lambda$ is again a non-unit, then \(\Lambda\) is called a \emph{local} ring. 
\end{defn}

\begin{rem}
\label{rem:local-ring}
A ring \(\Lambda\) is local if, equivalently, \(0\neq 1\) and \(\Lambda\) has a unique maximal right ideal. 
This right ideal is precisely the collection of non-units in such a ring. 
In particular, given an element \(x\) in a local ring \(\Lambda\), we have that \(x\) or \(1-x\) is invertible. See Anderson--Fuller \cite[Prop.\ 15.15]{AndersonFuller-rings-and-cats-of-modules}, or Lam \cite[Thm.\ 19.1]{Lam-first-course-noncomm-rings}.
\end{rem}

Local rings have very few idempotents as we now see. 
The converse of the following lemma holds if \(\Lambda\) is artinian; 
see e.g.\ \cite[Cor.\ 19.19]{Lam-first-course-noncomm-rings}.

\begin{lem}
\label{lem:local-ring-has-0-1-idempotents-only}
If \(\Lambda\) is local,  
then \(\Lambda\) has precisely two idempotents \(0\) and \(1\). In particular, the idempotent $1$ is primitive. 
\end{lem}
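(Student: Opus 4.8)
The plan is to lean entirely on the defining feature of local rings recorded in \cref{rem:local-ring}: for any element \(x\) of a local ring \(\Lambda\), either \(x\) or \(1-x\) is invertible. So I would take an arbitrary idempotent \(e\in\Lambda\) and apply this dichotomy to \(x=e\).

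First I would note that \(e\) and \(1-e\) are orthogonal idempotents by \cref{example:idempotents}\ref{item:1-e-is-idempotent}, so in particular \(e(1-e)=0=(1-e)e\). Now I split into the two cases provided by \cref{rem:local-ring}. If \(e\) is invertible, then multiplying \(e(1-e)=0\) on the left by \(e^{-1}\) yields \(1-e=0\), i.e.\ \(e=1\). If instead \(1-e\) is invertible, then multiplying \((1-e)e=0\) on the left by \((1-e)^{-1}\) yields \(e=0\). Since \cref{def:local-ring} requires \(0\neq 1\), these are two genuinely distinct idempotents, and the case analysis shows they are the only ones.

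For the ``in particular'' clause, suppose \(1=f+g\) with \(f,g\in\Lambda\) orthogonal idempotents. Then \(g=1-f\), and by the first part \(f\in\{0,1\}\); hence either \(f=0\), or \(f=1\) and then \(g=0\). In both cases one of \(f,g\) is zero, so the idempotent \(1\) is primitive. I do not expect any real obstacle here; the only point needing attention is to keep in mind that \(0\neq 1\) is part of the hypothesis of being local, so that \(0\) and \(1\) really do furnish two distinct idempotents rather than one.
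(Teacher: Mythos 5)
Your proof is correct and follows essentially the same route as the paper: both apply the dichotomy from \cref{rem:local-ring} that \(e\) or \(1-e\) is invertible and then deduce \(e=1\) or \(e=0\), differing only in the cosmetic detail of how the invertibility is exploited (you cancel the inverse against the orthogonality relation \(e(1-e)=0\), the paper computes \(e=e\cdot 1=e(ef)=ef=1\) directly). The primitivity argument is likewise the same.
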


\begin{proof}
Let \(e\in\Lambda\) be an idempotent. 
Then \(1-e\) is also idempotent by \cref{example:idempotents}\ref{item:1-e-is-idempotent}. 
Since \(\Lambda\) is local, we have that \(e\) or \(1-e\) is invertible by \cref{rem:local-ring}. 
If \(e\) is a unit and \(ef=1=fe\) for some \(f\in\Lambda\), then \(e=e\cdot1=e(ef)=ef=1\)
as \(e=e^{2}\). 
On the other hand, if \(1-e\) is invertible then 
a similar argument shows that 
\(1-e=1\), 
whence \(e=0\). 

For the other assertion, suppose $1 = e+f$ for some orthogonal idempotents $e,f\in\Lambda$. If $e = f= 0$ then $1= 0$, which is impossible. Therefore, without loss of generality, $e = 1$ and so $f = 0$. 
\end{proof}

A \emph{corner ring} of \(\Lambda\) is a ring of the form \(e\Lambda e\) for some idempotent \(e\in\Lambda\). 
Note that \(e\Lambda e\) is unital with unit \(1_{e\Lambda e} = e\). 
There is an isomorphism 
\begin{equation}
\label{eqn:isomorphism-Phi}
\Phi\colon \End_{\rMod{\Lambda}}(e\Lambda)
				\overset{\iso}{\longrightarrow} e\Lambda e
\end{equation}
of right \(e \Lambda e\)-modules 
given by 
\(
\Phi(h) \deff h(e) = h(e)e
\). 
Moreover, $\Phi$ is an isomorphism of rings. 
See 
Assem--Simson--Skowro\'{n}ski \cite[Lem.\ I.4.2]{AssemSimsonSkowronski-Vol1}.

A non-zero module \(M\) is called \emph{indecomposable}
if, whenever there is an isomorphism \(M\iso M_{1}\oplus M_{2}\) of modules, we have \(M_{1}=0\) or \(M_{2}=0\). 
When we have a decomposition like \(M\iso M_{1}\oplus M_{2}\), 
we usually more simply write \(M = M_{1}\oplus M_{2}\). 
Nothing is lost with this identification for our purposes 
since we are studying the uniqueness of direct sum decompositions up to permutation and isomorphism of summands.

\begin{lem}
\label{lem:Ae-indecom-iff-e-primitive-iff-eAe-local}
Let \(0\neq e\in\Lambda\) be an idempotent. 
Then the following are
equivalent. 
\begin{enumerate}[label=\textup{(\roman*)}] 
	\item\label{item:e-is-primitive} 
		The idempotent \(e\) is primitive.
	\item\label{item:corner-ring-is-local} 
		The only idempotents of 
			\(
				e\Lambda e
			\) 
			are \(0\) and 
			\(
			1_{e\Lambda e} = e
			\).
	\item\label{item:Lambda-e-is-indecomposable} 
		The right \(\Lambda\)-module \(e \Lambda\) is indecomposable.
\end{enumerate}
\end{lem}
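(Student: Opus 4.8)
The plan is to prove the two equivalences \ref{item:e-is-primitive}$\Leftrightarrow$\ref{item:corner-ring-is-local} and \ref{item:corner-ring-is-local}$\Leftrightarrow$\ref{item:Lambda-e-is-indecomposable} separately. The first is a direct computation inside $\Lambda$, using repeatedly that $ef = fe = f$ for every $f \in e\Lambda e$ and hence that idempotents of $e\Lambda e$ are also idempotents of $\Lambda$. The second is obtained by transporting idempotents across the ring isomorphism $\Phi$ of \eqref{eqn:isomorphism-Phi} and then invoking the standard fact that a non-zero module is indecomposable exactly when its endomorphism ring has no idempotents other than $0$ and the identity.

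For \ref{item:e-is-primitive}$\Rightarrow$\ref{item:corner-ring-is-local} I would take an idempotent $f \in e\Lambda e$, note $ef = fe = f$ so $f$ is idempotent in $\Lambda$, and then check that $e - f$ is an idempotent of $\Lambda$ orthogonal to $f$ with $f + (e-f) = e$; primitivity of $e$ forces $f = 0$ or $e - f = 0$, that is, $f = 0$ or $f = e = 1_{e\Lambda e}$. For the converse \ref{item:corner-ring-is-local}$\Rightarrow$\ref{item:e-is-primitive}, given orthogonal idempotents $f, g \in \Lambda$ with $e = f + g$, I would use $fg = gf = 0$ to compute $ef = (f+g)f = f = f(f+g) = fe$, so that $f = efe \in e\Lambda e$ and $f$ is an idempotent there; hypothesis \ref{item:corner-ring-is-local} then gives $f \in \{0, e\}$, and $f = e$ yields $g = e - f = 0$, establishing primitivity of $e$.

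For \ref{item:corner-ring-is-local}$\Leftrightarrow$\ref{item:Lambda-e-is-indecomposable}, since $\Phi$ is an isomorphism of rings it restricts to a bijection between the idempotents of $\End_{\rMod{\Lambda}}(e\Lambda)$ and those of $e\Lambda e$, sending $0$ to $0$ and $\id{e\Lambda}$ to $e$; so \ref{item:corner-ring-is-local} holds if and only if $0$ and $\id{e\Lambda}$ are the only idempotents of $\End_{\rMod{\Lambda}}(e\Lambda)$. Note $e\Lambda \neq 0$ as $e \neq 0$. It then remains to recall that a non-zero right $\Lambda$-module $M$ is indecomposable precisely when the only idempotents of $\End_{\rMod{\Lambda}}(M)$ are $0$ and $\id{M}$: a decomposition $M = M_1 \oplus M_2$ gives the idempotent projection $M \to M_1 \hookrightarrow M$, which is $0$ exactly when $M_1 = 0$ and $\id{M}$ exactly when $M_2 = 0$; conversely an idempotent $\varepsilon \in \End_{\rMod{\Lambda}}(M)$ induces an internal direct sum $M = \im\varepsilon \oplus \ker\varepsilon$, so indecomposability forces $\im\varepsilon = 0$ (whence $\varepsilon = 0$) or $\ker\varepsilon = 0$ (whence $\varepsilon$ is an automorphism, and $\varepsilon = \id{M}$ since $\varepsilon^2 = \varepsilon$). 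Applying this with $M = e\Lambda$ finishes the proof.

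I do not expect a genuine obstacle here; the only step needing mild care is the splitting $M = \im\varepsilon \oplus \ker\varepsilon$ attached to an idempotent endomorphism $\varepsilon$, which follows from writing $m = \varepsilon(m) + (m - \varepsilon(m))$ and checking $\im\varepsilon \cap \ker\varepsilon = 0$. Otherwise the main task is bookkeeping: keeping track of which ring each idempotent lives in, and using the identities $ef = fe = f$ on $e\Lambda e$ consistently.
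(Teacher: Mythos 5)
Your proof is correct and follows essentially the same route as the paper: the equivalence of \ref{item:e-is-primitive} and \ref{item:corner-ring-is-local} by the decomposition $e = f + (e-f)$ inside $\Lambda$, and the passage to \ref{item:Lambda-e-is-indecomposable} via the ring isomorphism $\Phi$ of \eqref{eqn:isomorphism-Phi}. The only organisational difference is that the paper closes the cycle with \ref{item:Lambda-e-is-indecomposable}$\Rightarrow$\ref{item:e-is-primitive} (using $e\Lambda = f\Lambda \oplus g\Lambda$) where you instead give a direct ring-theoretic proof of \ref{item:corner-ring-is-local}$\Rightarrow$\ref{item:e-is-primitive}; both are fine.
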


\begin{proof}
\ref{item:e-is-primitive} \(\Rightarrow\) \ref{item:corner-ring-is-local}\;\; 
Let \(efe\in e\Lambda e\) be an idempotent. 
We have \(e=efe+(e-efe)\), which implies \(efe=0\) or \(e-efe=0\), because
\(e\) is primitive by assumption, and we are done.

\ref{item:corner-ring-is-local} \(\Rightarrow\)  \ref{item:Lambda-e-is-indecomposable}\;\; 
Let \(e\Lambda=X_{1}\oplus X_{2}\) be
a decomposition of \(e\Lambda\), and let \(f\colon e\Lambda \onto X_{1}\into e\Lambda\),
\(g\colon e\Lambda\onto X_{2}\into e\Lambda\) be the compositions of the canonical projections and inclusions. 
Then \(f,g\) are idempotents in 
\(
	\End_{\rMod{\Lambda}}(e\Lambda) 
	\) 
and, moreover,
\(\id{e\Lambda}=f+g\). 

Using the isomorphism $\Phi$ from \eqref{eqn:isomorphism-Phi} and that $e\neq 0$, we see that $\End_{\rMod{\Lambda}}(e\Lambda) \neq 0$.
Since $\Phi(f),\Phi(g)\in e\Lambda e$ are idempotents, we have 
$\Phi(f),\Phi(g)\in\{ 0,e \}$ by assumption. 
If $\Phi(f)=\Phi(g)=0$, then $f=g=0$ as $\Phi$ is an isomorphism, whence $X_{1} = X_{2} = 0$. But this forces $e\Lambda = 0$, which is a contradiction. 
Thus, without loss of generality, $\Phi(f) = e = \Phi(\id{e\Lambda})$. 
This implies $f = \id{e\Lambda}$ and hence $g = 0$. In particular, this means $X_{2} = 0$ and \(X=\Lambda e\) is indecomposable.

\ref{item:Lambda-e-is-indecomposable} \(\Rightarrow\) \ref{item:e-is-primitive}\;\; 
If \(e=f+g\) where \(f,g\) are orthogonal idempotents,
then \(e\Lambda=(f+g)\Lambda = f\Lambda \oplus g\Lambda\). But then,
without loss of generality, \(f\Lambda=0\) as \(e\Lambda\) is indecomposable by assumption 
and so \(f=0\). Thus, \(e\) is primitive. 
\end{proof}

\subsection{Idempotents in categories}

By an \emph{idempotent} in a category $\CA$ we mean any endomorphism $e\colon X \to X$ for some object $X$ satisfying $e^{2} = e$. We begin with an easy lemma.

\begin{lem}
\label{lem:strong-indecomposable-gives-prim-idempotent}
Let $\CA$ be an additive category with an object $X$. Suppose $X = A\oplus B$ where $\End_{\CA}(A)$ is local. 
Consider the canonical projection \(p\colon X\onto A\) and canonical
inclusion \(i\colon A\into X\). Then the idempotent $e \deff ip \in\End_{\CA}(X)$ is primitive. 
\end{lem}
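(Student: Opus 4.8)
The plan is to show $e = ip$ is primitive by exploiting \cref{lem:Ae-indecom-iff-e-primitive-iff-eAe-local}, which reduces primitivity of $e$ in $\Lambda \deff \End_{\CA}(X)$ to indecomposability of the right $\Lambda$-module $e\Lambda$. So the real task is to identify $e\Lambda$ as a $\Lambda$-module and see that it is indecomposable, using the hypothesis that $\End_{\CA}(A)$ is local.

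First I would record the standard identities attached to the biproduct $X = A \oplus B$, namely $pi = \id{A}$ and $e = ip$ idempotent (this is immediate from $pi = \id{A}$). The key computational step is then to construct an isomorphism of corner rings $e\Lambda e \iso \End_{\CA}(A)$. The natural candidate is the map sending $\varphi \in e\Lambda e$ to $p\varphi i \in \End_{\CA}(A)$, with inverse $\psi \mapsto i\psi p$; one checks these are mutually inverse ring homomorphisms using $pi = \id{A}$ and $e = ip$ (for instance, $i(p\varphi i)p = (ip)\varphi(ip) = e\varphi e = \varphi$ since $\varphi$ lies in the corner ring). I expect this verification to be a short, routine calculation.

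Once $e\Lambda e \iso \End_{\CA}(A)$ as rings, the hypothesis that $\End_{\CA}(A)$ is local kicks in: by \cref{lem:local-ring-has-0-1-idempotents-only}, a local ring has only the idempotents $0$ and $1$, so the same is true of $e\Lambda e$, and its unit $1_{e\Lambda e} = e$ is primitive there. Since $e \neq 0$ (indeed $\End_{\CA}(A)$ local forces $A \neq 0$, hence $p \neq 0$ and $e = ip \neq 0$ — or one simply notes $0 \neq 1$ in a local ring transports across the isomorphism), I can apply \cref{lem:Ae-indecom-iff-e-primitive-iff-eAe-local} with $\Lambda = \End_{\CA}(X)$ and the idempotent $e$: condition \ref{item:corner-ring-is-local} holds, hence condition \ref{item:e-is-primitive} holds, i.e.\ $e$ is primitive in $\End_{\CA}(X)$, which is exactly the claim.

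**The main obstacle** is essentially bookkeeping: being careful that $e\Lambda e$ genuinely is a ring with unit $e$ (already noted in the excerpt) and that the map $\varphi \mapsto p\varphi i$ respects multiplication — the point being that for $\varphi, \varphi' \in e\Lambda e$ one has $p\varphi i \cdot p\varphi' i = p\varphi(ip)\varphi' i = p\varphi e \varphi' i = p\varphi\varphi' i$ because $\varphi e = \varphi$. There is no deep difficulty; the only thing to be vigilant about is the handedness of composition (the paper writes $gf = g \circ f$) so that the ring structure on $e\Lambda e$ and on $\End_{\CA}(A)$ are matched up correctly rather than getting an anti-isomorphism. An alternative, slightly slicker route avoids corner rings entirely: show directly that any idempotent decomposition $e = f + g$ with $f,g$ orthogonal idempotents in $\End_{\CA}(X)$ yields orthogonal idempotents $pfi, pgi$ in $\End_{\CA}(A)$ summing to $\id{A}$, then invoke \cref{lem:local-ring-has-0-1-idempotents-only} to conclude one of them is $0$ and hence (since $f = efe = i(pfi)p$, etc.) one of $f,g$ is $0$ — but the corner-ring argument is cleaner to state given the machinery already assembled.
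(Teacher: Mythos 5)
Your proof is correct, but your primary route is not the one the paper takes. The paper argues directly: given orthogonal idempotents $f,g\in\End_{\CA}(X)$ with $e=f+g$, it notes $efe=f$, checks that $pfi$ and $pgi$ are orthogonal idempotents in $\End_{\CA}(A)$ summing to $\id{A}$, applies \cref{lem:local-ring-has-0-1-idempotents-only} to conclude (without loss of generality) that $pfi=0$, and recovers $f=efe=i(pfi)p=0$. This is precisely the ``alternative, slightly slicker route'' you sketch in your final paragraph, so you have in fact reproduced the paper's proof as your fallback. Your main route instead packages the two transport maps $\varphi\mapsto p\varphi i$ and $\psi\mapsto i\psi p$ into a ring isomorphism $e\Lambda e\iso\End_{\CA}(A)$ (your verification, using $\varphi e=\varphi=e\varphi$ for $\varphi\in e\Lambda e$ and $pi=\id{A}$, is sound and correctly handles the composition convention), deduces that $e\Lambda e$ has only the idempotents $0$ and $e$, checks $e\neq 0$, and then invokes the implication from condition (ii) to condition (i) of \cref{lem:Ae-indecom-iff-e-primitive-iff-eAe-local}. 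Both arguments rest on the same two maps; yours is marginally longer because it routes through the general ring-theoretic lemma (whose proof of that implication passes through indecomposability of the module $e\Lambda$) and must verify $e\neq 0$, but it buys a reusable structural statement --- the corner ring at $e$ is the local ring $\End_{\CA}(A)$ --- whereas the paper's direct computation is two lines, self-contained, and needs no nonvanishing check. Either is acceptable.
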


\begin{proof}
Suppose $e = f+g$ for some orthogonal idempotents $f,g\in\End_{\CA}(X)$. 
Note that $efe = f$. 
A straightforward verification shows $pfi, pgi\in\End_{\CA}(A)$ are orthogonal idempotents and that $\id{A} = pfi + pgi$. 
Since $\End_{\CA}(A)$ is local, by \cref{lem:local-ring-has-0-1-idempotents-only} and without loss of generality, we have that $pfi = 0$. This implies 
$f = efe = i (pfi) p = 0$, and so $e$ is primitive. 
\end{proof}

\begin{defn}
\label{def:has-split-idempotents}
A category \(\CA\) \emph{has split idempotents} 
if, for
each \(X\in\CA\) and every idempotent \(e\in\End_{\CA}(X)\), 
there exists an object \(Y\in\CA\) and morphisms \(r\colon X\to Y\), \(s\colon Y\to X\) 
such that \(e=sr\) and \(rs=\id{Y}\).
\end{defn}

\begin{rem}
\label{rem:retract-section-in-split-idempotent}
With the notation as in \cref{def:has-split-idempotents},
we see that \(r\) is a retraction (hence an epimorphism) and \(s\) is a section (hence a monomorphism). 
\end{rem}

It turns out that any additive category embeds into one that has split idempotents; see Karoubi \cite{Karoubi-algebres-de-Clifford-et-K-theorie}, or B\"{u}hler \cite[\S 6]{Buhler-exact-categories} for a nice exposition.
The following set of equivalent conditions for a category to have split idempotents is well-known for additive categories (see e.g.\ \cite[Rmk.\ 6.2]{Buhler-exact-categories}). 
The same argument, however, also works for preadditive categories.

\begin{prop}
\label{prop:split-idems-iff-idems-admit-kernels-iff-idems-admit-cokernels}
Let \(\CA\) be a preadditive category. 
Then the following are equivalent. 
\begin{enumerate}[label=\textup{(\roman*)}]
	\item\label{item:split-idems} \(\CA\) has split idempotents.
	\item\label{item:idems-kernels} Each idempotent in \(\CA\) admits a kernel in \(\CA\).
	\item\label{item:idems-cokernels} Each idempotent in \(\CA\) admits a cokernel in \(\CA\).
\end{enumerate}
\end{prop}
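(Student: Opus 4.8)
The strategy is to prove the cyclic chain of implications \ref{item:split-idems} $\Rightarrow$ \ref{item:idems-kernels} $\Rightarrow$ \ref{item:idems-cokernels} $\Rightarrow$ \ref{item:split-idems}, exploiting the self-dual nature of \cref{def:has-split-idempotents}: if $e$ splits, then so does $1-e$ via the same data with the roles of $X$ and $Y$ suitably adjusted, and a statement about kernels of all idempotents is, under the substitution $e \mapsto 1-e$, a statement about cokernels of all idempotents. So really only one genuine implication needs proving, say \ref{item:split-idems} $\Rightarrow$ \ref{item:idems-kernels}, together with the observation that \ref{item:idems-kernels} and \ref{item:idems-cokernels} are interchanged by passing from $e$ to $1-e$ (here I use \cref{example:idempotents}\ref{item:1-e-is-idempotent}, noting that this only requires the ambient ring $\End_\CA(X)$ to be unital, which holds in any preadditive category once we know $\id{X}$ exists as an identity for composition).

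For \ref{item:split-idems} $\Rightarrow$ \ref{item:idems-kernels}: given an idempotent $e \colon X \to X$, apply splitting to the complementary idempotent $1-e$ (again using \cref{example:idempotents}\ref{item:1-e-is-idempotent}), obtaining $Y$, $r \colon X \to Y$, $s \colon Y \to X$ with $sr = 1-e$ and $rs = \id{Y}$. The claim is that $s \colon Y \to X$ is a kernel of $e$. First, $es = (1 - sr)s = s - s(rs) = s - s = 0$, so $s$ factors through any kernel candidate in the appropriate sense. Next, if $a \colon A \to X$ satisfies $ea = 0$, then $a = (1-e)a = sra$, so $a$ factors through $s$ via $b \deff ra \colon A \to Y$. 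Finally, for uniqueness of $b$: if $sb = sb'$ then $b = (rs)b = r(sb) = r(sb') = (rs)b' = b'$, using $rs = \id{Y}$. Hence $s$ is a kernel of $e$, establishing \ref{item:idems-kernels}.

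For \ref{item:idems-kernels} $\Rightarrow$ \ref{item:idems-cokernels}: let $e$ be an idempotent; then $1-e$ is an idempotent, and by \ref{item:idems-kernels} it has a kernel $i \colon K \to X$. One checks (dualizing the previous paragraph, or arguing directly that $ei = i$ and that $i$ is monic with the right universal property) that the cokernel of $e$ can be read off from this kernel of $1-e$; concretely, one shows the cokernel of $e$ exists. The cleanest route here is to observe that the statement ``every idempotent in $\CA$ has a kernel'' applied to the preadditive category $\CA$ is exactly the statement ``every idempotent in $\CA^{\op}$ has a cokernel,'' and since idempotents of $\CA^{\op}$ are precisely idempotents of $\CA$, and cokernels in $\CA^{\op}$ are kernels in $\CA$, the two conditions \ref{item:idems-kernels} and \ref{item:idems-cokernels} are formally equivalent by duality — so this implication, like \ref{item:idems-cokernels} $\Rightarrow$ \ref{item:split-idems}, needs no separate computation once \ref{item:split-idems} $\Rightarrow$ \ref{item:idems-kernels} is in hand and one notes that \cref{def:has-split-idempotents} is self-dual.

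**Main obstacle.** There is no serious obstacle; the only point requiring a little care is bookkeeping the duality correctly, namely checking that ``has split idempotents'' really is a self-dual property (swap $r \leftrightarrow s$, $X \leftrightarrow Y$: if $e = sr$ and $rs = \id Y$ in $\CA$, the same arrows in $\CA^{\op}$ witness that the idempotent $e$ of $\CA^{\op}$ splits), so that the single worked implication \ref{item:split-idems} $\Rightarrow$ \ref{item:idems-kernels}, fed through $\CA \mapsto \CA^{\op}$ and $e \mapsto 1-e$, yields all remaining implications. One should also remark explicitly that preadditivity (not full additivity) suffices because nowhere do we invoke the zero object or biproducts — only the abelian group structure on Hom-sets, bilinearity of composition, and the existence of $\id X$ — which is precisely what makes $1-e$ meaningful and the above cancellations valid.
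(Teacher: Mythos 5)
Your implication \ref{item:split-idems} $\Rightarrow$ \ref{item:idems-kernels} is correct and is essentially the paper's argument: split $\id{X}-e$ as $sr$ with $rs=\id{Y}$ and check that the monic section $s$ is a kernel of $e$. The problem is everything you then try to get for free. Duality only dualises implications you have actually proved, and the dual of ``\ref{item:split-idems} $\Rightarrow$ \ref{item:idems-kernels}'' is ``\ref{item:split-idems} $\Rightarrow$ \ref{item:idems-cokernels}'' (condition \ref{item:split-idems} is self-dual, while \ref{item:idems-kernels} and \ref{item:idems-cokernels} are dual to one another). Neither passing to $\CA^{\op}$ nor substituting $e\mapsto \id{X}-e$ can turn an implication \emph{out of} \ref{item:split-idems} into an implication \emph{into} \ref{item:split-idems}, so your argument never establishes \ref{item:idems-kernels} $\Rightarrow$ \ref{item:split-idems}, and the three conditions are not shown to be equivalent. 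For the same reason, \ref{item:idems-kernels} and \ref{item:idems-cokernels} are not ``formally equivalent by duality'' for a fixed $\CA$: condition \ref{item:idems-kernels} for $\CA$ is condition \ref{item:idems-cokernels} for the \emph{different} category $\CA^{\op}$, so this observation transports proved implications but does not identify the two conditions. The missing ingredient is a genuine converse, which is short but not dual to what you did (and is the paper's second step): given an idempotent $e$ and a kernel $s\colon Y\to X$ of $\id{X}-e$, the relation $(\id{X}-e)e=0$ yields a unique $r$ with $e=sr$; then $(\id{X}-e)s=0$ gives $es=s$, hence $s(rs)=(sr)s=es=s$ and $rs=\id{Y}$ because $s$ is monic. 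Once both directions of \ref{item:split-idems} $\Leftrightarrow$ \ref{item:idems-kernels} are in hand, duality does finish the proof exactly as you intend.

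A second, independent error: the claim that if $e$ splits then $\id{X}-e$ ``splits via the same data'' is false in a general (pre)additive category. Take a ring $\Lambda$ with a stably free non-free module $P$ satisfying $P\oplus\Lambda\iso\Lambda^{n}$, and let $\CA$ be the category of finitely generated free $\Lambda$-modules: the idempotent $e$ on $\Lambda^{n}$ projecting onto the summand $\Lambda$ splits in $\CA$, but any splitting of $\id{\Lambda^{n}}-e$ would exhibit $P$ as (isomorphic to) an object of $\CA$, which it is not. What is true, and all you need, is that \ref{item:split-idems}, as a condition on \emph{all} idempotents at once, is self-dual. Relatedly, your sketch of \ref{item:idems-kernels} $\Rightarrow$ \ref{item:idems-cokernels} aims at the wrong target: a kernel of $\id{X}-e$ produces a splitting of $e$ and thence a cokernel of $\id{X}-e$, not of $e$; to obtain the cokernel of $e$ you must apply \ref{item:idems-kernels} to $e$ itself and run the converse argument above to split $\id{X}-e$.
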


\begin{proof}
\ref{item:split-idems} \(\Rightarrow\) \ref{item:idems-kernels}\;\; 
Let \(e\colon X\to X\) be an arbitrary idempotent in $\CA$. 
By assumption, 
\(\id{X}-e\) splits and so there exist \(t\colon X\to Z\),
\(u\colon Z\to X\) with \(\id{X}-e=ut\) and \(tu=\id{Z}\).
Recall that $t$ is epic and $u$ is monic (see \cref{rem:retract-section-in-split-idempotent}). 
We claim that \(Z\overset{u}{\to}X\) is a kernel for \(e\). Since \((eu)t=e(ut)=e(\id{X}-e)=0\)
and \(t\) is epic, we see that \(eu=0\). 
Suppose \(a\colon A\to X\)
is a morphism in \(\CA\) such that \(ea=0\). 
Then 
\[
a=a-0=a-ea=(\id{X}-e)a=(ut)a=u(ta),
\]
so \(a\) factors through \(u\). 
Thus, 
$u$ 
is a monomorphism that is a weak kernel of 
\(e\) and hence a kernel of 
\(e\) by \cref{lem:monic-weak-kernel-is-kernel-and-dual}.

\ref{item:idems-kernels} \(\Rightarrow\) \ref{item:split-idems}\;\; 
Let \(e\in\End_{\CA}(X)\) be an idempotent. The idempotent \(\id{X}-e\)
admits a kernel \(Y\deff\Ker(\id{X}-e)\overset{s}{\into}X\). 
As \((\id{X}-e)e=0\), we have that \(e\) factors through \(s\).
Thus, there exists a unique morphism \( r\colon X\to Y\) such that
\(e=s r\), so it suffices to show \( rs=\id{Y}\). As \(s\)
is the kernel of \(\id{X}-e\) we obtain \(s-es=(\id{X}-e)s=0\),
so \(es=s\). Thus, \(s( rs)=(s r)s=es=s\)
and hence \( rs=\id{Y}\) as \(s\) is a monomorphism. 
This shows \(e\) splits. 

The equivalence \ref{item:split-idems} and \ref{item:idems-cokernels} is dual.
\end{proof}

Given an idempotent \(e\colon X\to X\) in an additive category with split idempotents, we can identify two direct summands of \(X\). 
This result is also classical (see e.g.\ Auslander \cite[p.\ 188]{Auslander-Rep-theory-of-Artin-algebras-I}).

\begin{prop}
\label{prop:decomp-of-object-when-idem-splits}
If an additive category \(\CA\) has split idempotents, then for each idempotent \(e\colon X\to X\)
we have \(X = \Ker(e)\oplus\Ker(\id{X}-e)\).
\end{prop}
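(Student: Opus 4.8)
The plan is to run the construction from the proof of \cref{prop:split-idems-iff-idems-admit-kernels-iff-idems-admit-cokernels} twice---once for $e$ and once for $\id{X}-e$---and then to recognise the resulting morphisms as the structure maps of a biproduct.

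First I would observe that $\id{X}-e$ is again an idempotent by \cref{example:idempotents}\ref{item:1-e-is-idempotent}, so both $e$ and $\id{X}-e$ split in $\CA$. Recall from the proof of \cref{prop:split-idems-iff-idems-admit-kernels-iff-idems-admit-cokernels} that an idempotent splits through the kernel of its complement; applying this to $e$ gives a kernel $s\colon Y\to X$ of $\id{X}-e$, where $Y=\Ker(\id{X}-e)$, together with a morphism $r\colon X\to Y$ satisfying $e=sr$ and $rs=\id{Y}$, and in particular $s$ is monic with $(\id{X}-e)s=0$. Applying the same to $\id{X}-e$ in place of $e$ (and using $\Ker(\id{X}-(\id{X}-e))=\Ker(e)$) gives a kernel $u\colon Z\to X$ of $e$, where $Z=\Ker(e)$, together with $t\colon X\to Z$ satisfying $\id{X}-e=ut$ and $tu=\id{Z}$, with $u$ monic and $eu=0$.

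It then remains to verify the biproduct relations of \cref{rem:on-the-definition-of-additive-category} for $X$ with inclusions $u\colon Z\to X$ and $s\colon Y\to X$ and projections $t\colon X\to Z$ and $r\colon X\to Y$. Two of them, $tu=\id{Z}$ and $rs=\id{Y}$, are already in hand, and $ut+sr=(\id{X}-e)+e=\id{X}$ is immediate. For the two cross terms I would use that $s$ and $u$ are monomorphisms: from $u(ts)=(ut)s=(\id{X}-e)s=0$ one gets $ts=0$, and from $s(ru)=(sr)u=eu=0$ one gets $ru=0$. By the converse to \cref{rem:on-the-definition-of-additive-category}---the standard fact that an object carrying such data is a direct sum---this exhibits $X$ as $\Ker(e)\oplus\Ker(\id{X}-e)$, as required.

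I do not expect any real obstacle here: the whole argument is a short diagram chase relying only on the earlier splitting construction and on $s$ and $u$ being monic. The only points needing a little care are the double bookkeeping (keeping straight which kernel splits which idempotent) and, stylistically, whether to invoke the biproduct characterisation as folklore or instead to verify directly the universal property of the coproduct $\Ker(e)\bincoprod\Ker(\id{X}-e)$ from the four morphisms.
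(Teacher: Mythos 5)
Your argument is correct and is essentially the paper's own proof: both split $e$ and $\id{X}-e$ through the kernels of their complements, verify the four biproduct identities (the two cross terms vanishing by monicity of the kernel maps), and conclude via the universal property of the coproduct. No gaps.
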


\begin{proof}
Let \(e\in\End_{\CA}(X)\) be an idempotent. 
Arguing as in the proof of \cref{prop:split-idems-iff-idems-admit-kernels-iff-idems-admit-cokernels}, 
we obtain a commutative diagram 
\[
\begin{tikzcd}[row sep=1cm,column sep=1cm]
&X\arrow[dotted]{dl}[swap]{\exists !p_{1}}\arrow{d}{1_X -e}& \\
X_1\deff\Ker(e)\arrow[hook]{r}{i_1}&X\arrow[dotted]{dl}[swap]{\exists !p_2}\arrow{d}{e}& \\
X_2\deff\Ker(1_X -e)\arrow[hook]{r}[swap]{i_2}&X\arrow{r}[swap]{1_X -e}&X,
\end{tikzcd}
\]
where \(p_{j}i_{j}=\id{X_{j}}\) for \(j=1,2\). We also have \(\id{X}=(\id{X}-e)+e=i_{1}p_{1}+i_{2}p_{2}\).
Furthermore, \(i_{1}p_{1}i_{2}=(\id{X}-e)i_{2}=0\) as
$i_{2} = \ker(\id{X} - e)$, 
so \(p_{1}i_{2}=0\) since \(i_{1}\) is
monic. Similarly, \(p_{2}i_{1}=0\). 
One can then easily check that $X$ satisfies the universal property for the coproduct
\(X_{1}\bincoprod X_{2}\). 
Hence, 
\(X = X_{1}\bincoprod X_{2}=X_{1}\oplus X_{2}=\Ker(e)\oplus\Ker(\id{X}-e)\). 
\end{proof}


\section{Krull-Schmidt categories and semi-perfect rings}
\label{sec:Krull-Schmidt-categories-semi-perfect-rings}

In this section, we follow \cite{Krause-KS-cats-and-projective-covers} in introducing Krull-Remak-Schmidt decompositions and Krull-Schmidt categories (see \cref{def:Krull-Schmidt-category}). 
We include proofs where we think the reader may benefit from some extra detail; otherwise proofs are omitted and can be found in \cite{Krause-KS-cats-and-projective-covers}. See also \cite[App.\ A]{ChenYeZhang-Algebras-of-derived-dimension-zero}.

For this section, let \(\CA\) denote an additive category. 
For an object \(X\in\CA\), we let \(\add{X}\) denote the full subcategory of \(\CA\)
consisting of all direct summands of finite direct sums of copies of \(X\). 
For a ring \(\Lambda\), 
we denote by \(\rproj{\Lambda}\) 
the full subcategory of \(\rMod{\Lambda}\) consisting of finitely
generated projective right \(\Lambda\)-modules.

\begin{rem}
\label{rem:projR-has-split-idems}
The category \(\rproj{\Lambda}\) has split idempotents (see \cite[Exam.\ 2.2(2)]{Krause-KS-cats-and-projective-covers}). 
Indeed, suppose \(e\in\End_{\rproj{\Lambda}}(P)=\End_{\rMod{\Lambda}}(P)\) is an idempotent.
As \(\rMod{\Lambda}\) is abelian, 
the idempotents \(e\) and \(\id{P}-e\) have kernels in \(\rMod{\Lambda}\). 
By \cref{prop:decomp-of-object-when-idem-splits}, 
we know \(P = \Ker(e)\oplus\Ker(\id{P}-e)\) in \(\rMod{\Lambda}\). 
It follows that \(\Ker(e)\) is projective and finitely
generated, so \(e\) has a kernel in \(\rproj{\Lambda}\). 
Therefore, \(\rproj{\Lambda}\) has split idempotents by \cref{prop:split-idems-iff-idems-admit-kernels-iff-idems-admit-cokernels}. 
\end{rem}

\begin{prop}
\label{prop:Kr15-Prop-2-3-hom-functor-induces-equivalence-to-projectives}
Suppose \(X\in\CA\) and set \(\Lambda_{X}\deff\End_{\CA}(X)\).
The additive functor \(H_{X}(-)\deff \Hom_{\CA}(X,-)\colon\CA\to\rMod{\Lambda_{X}}\) induces
a fully faithful additive functor \(\add{X}\to\rproj{\Lambda}_{X}\). 
If \(\CA\) has split idempotents then this is an equivalence. 
For each $X_{0}\in\add{X}$, the induced map 
$F_{X_{0}}\colon \End_{\CA}(X_{0}) \to \End_{\rMod{\Lambda_{X}}}(H_{X}(X_{0}))$ 
is an isomorphism of rings.
\end{prop}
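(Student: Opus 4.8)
The plan is to run the standard `projectivization' argument. Throughout write $\Lambda \deff \Lambda_{X} = \End_{\CA}(X)$, and recall that $H_{X} = \Hom_{\CA}(X,-)$ is additive.

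\emph{The base case.} First I would pin down what happens at $X$ itself. We have $H_{X}(X) = \Hom_{\CA}(X,X) = \Lambda$, and the right $\Lambda$-module structure on $H_{X}(X)$ given by precomposition is exactly that of the right regular module, i.e.\ it is $e\Lambda$ for $e = 1_{\Lambda}$ in the notation of \eqref{eqn:isomorphism-Phi}. For $g \in \End_{\CA}(X)$ the endomorphism $H_{X}(g)$ of $\Lambda$ is left multiplication by $g$, so $F_{X}(g) \deff H_{X}(g)$ lies in $\End_{\rMod{\Lambda}}(\Lambda)$, and $\Phi(F_{X}(g)) = F_{X}(g)(1_{\Lambda}) = g$. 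Since $H_{X}$ is a functor, $F_{X}$ is a ring homomorphism, and the computation just made shows $F_{X} = \Phi^{-1}$ for $\Phi$ the ring isomorphism of \eqref{eqn:isomorphism-Phi} with $e = 1_{\Lambda}$; hence $F_{X}$ is a ring isomorphism. As $H_{X}$ is additive it sends $X^{n}$ to $\Lambda^{n}$, which is finitely generated projective, and additive functors preserve direct summands, so $H_{X}$ does map $\add{X}$ into $\rproj{\Lambda}$.

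\emph{Full faithfulness on $\add{X}$, and the ring isomorphism.} Next I would bootstrap from the base case. Writing morphisms between the $X^{n}$ and between the $\Lambda^{n}$ as matrices and using additivity of $H_{X}$ on each side, the base case upgrades to the statement that $H_{X}$ is fully faithful on the full subcategory of $\CA$ spanned by $\{X^{n} : n \geq 0\}$. Now take $X_{0}, Y_{0} \in \add{X}$, realised as retracts $i\colon X_{0}\to X^{m}$, $p\colon X^{m}\to X_{0}$ with $pi = \id{X_{0}}$, and $i'\colon Y_{0}\to X^{n}$, $p'\colon X^{n}\to Y_{0}$ with $p'i' = \id{Y_{0}}$. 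Given $\beta \in \Hom_{\rMod{\Lambda}}(H_{X}(X_{0}), H_{X}(Y_{0}))$ I would write $H_{X}(i')\,\beta\,H_{X}(p) = H_{X}(\gamma)$ for the unique $\gamma\colon X^{m}\to X^{n}$ supplied by the previous step and set $\alpha \deff p'\gamma i$; a short check using functoriality gives $H_{X}(\alpha) = \beta$, and $\beta = 0$ forces $\gamma = 0$, hence $\alpha = 0$. So $H_{X}$ restricts to a fully faithful functor $\add{X}\to\rproj{\Lambda}$. Taking $Y_{0} = X_{0}$ shows $F_{X_{0}}$ is bijective, and it is a ring homomorphism because $H_{X}$ is a functor --- hence a ring isomorphism. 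This settles the first and third assertions.

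\emph{Essential surjectivity under split idempotents.} Finally, suppose $\CA$ has split idempotents and let $P \in \rproj{\Lambda}$. A split epimorphism $\Lambda^{n} \onto P$ exhibits $P$ as a retract of $\Lambda^{n} = H_{X}(X^{n})$ and yields an idempotent $\bar{e} \in \End_{\rMod{\Lambda}}(H_{X}(X^{n}))$ whose image is $P$. By full faithfulness on $X^{n}$ there is an idempotent $e \in \End_{\CA}(X^{n})$ with $H_{X}(e) = \bar{e}$ (idempotency transfers because $F_{X^{n}}$ is an injective ring homomorphism). Since $\CA$ has split idempotents, split $e$: this gives $X_{0} \in \add{X}$ with $j\colon X_{0}\to X^{n}$, $q\colon X^{n}\to X_{0}$, $qj = \id{X_{0}}$ and $jq = e$. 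Applying $H_{X}$, the triple $(H_{X}(X_{0}), H_{X}(j), H_{X}(q))$ splits $\bar{e}$, so $H_{X}(X_{0}) \iso P$. Thus $H_{X}\colon\add{X}\to\rproj{\Lambda}$ is essentially surjective, and being fully faithful it is an equivalence.

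\emph{Expected obstacle.} Nothing here is deep; the places to be careful are the sidedness of the module structures and the variance of $F_{X}$ in the base case, and the idempotent/retract bookkeeping in the last two steps --- in particular checking that idempotency genuinely transfers along the injective ring map $F_{X^{n}}$, and that a splitting of $e$ in $\CA$ really does split $\bar{e} = H_{X}(e)$ in $\rMod{\Lambda}$.
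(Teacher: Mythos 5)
Your proposal is correct. Note, though, that the paper itself does not prove the first two assertions: it delegates them to \cite[Prop.~2.3]{Krause-KS-cats-and-projective-covers} and only argues the third (the ring isomorphism $F_{X_{0}}$), deducing it from full faithfulness plus functoriality exactly as you do in your second paragraph. What you have written out is the standard projectivization argument that underlies the cited result: identify $H_{X}(X)$ with the regular module and $F_{X}$ with $\Phi^{-1}$ from \eqref{eqn:isomorphism-Phi}, extend to the powers $X^{n}$ by matrix bookkeeping, pass to retracts for full faithfulness on all of $\add{X}$, and use splitting of idempotents for essential surjectivity. The details check out: the sidedness is right (the right $\Lambda_{X}$-action on $\Hom_{\CA}(X,Y)$ is precomposition, so $H_{X}(g)$ is left multiplication and $\Phi(F_{X}(g)) = g$), idempotency does transfer along the injective ring map $F_{X^{n}}$ since $F(e)^{2}-F(e)=F(e^{2}-e)$, and applying the additive functor $H_{X}$ to a splitting $qj=\id{X_{0}}$, $jq=e$ genuinely splits $\bar{e}$. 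So your write-up buys self-containedness where the paper relies on an external reference; the two agree on the only part the paper proves directly.
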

\begin{proof}
The first two assertions follow from \cite[Prop.\ 2.3]{Krause-KS-cats-and-projective-covers}. 
For the last claim, we have that the induced map $F_{X_{0}}$ is a bijective homomorphism of abelian groups since $\Hom_{\CA}(X,-)$ is additive and fully faithful on $\add{X}$. 
Since $\Hom_{\CA}(X,-)$ is a covariant functor, the map $F_{X_{0}}$ 
preserves composition (i.e.\ is multiplicative) and identities (i.e.\ is a map of unital rings). Hence, $F_{X_{0}}$ is a unital ring isomorphism.
\end{proof}

The next definition is a generalisation of an indecomposable module.

\begin{defn}
An object \(X\in\CA\) is called \emph{indecomposable}
if \(X\) is non-zero, and 
if \(X_{1}=0\) or \(X_{2}=0\) whenever there is an isomorphism \(X\iso X_{1}\oplus X_{2}\).
\end{defn}

\begin{rem}
\label{rem:addX-closed-under-direc-summands}
Let $X\in\CA$. Since $\add{X}$ is closed under direct summands, an object $Y\in\add{X}$ is indecomposable in $\add{X}$ if and only if it is indecomposable in $\CA$. 
\end{rem}

The following result is very well-known (see e.g.\ Harada \cite{Harada-apps-of-factor-cats}).

\begin{lem}
\label{lem:local-endo-ring-implies-indecomposable}
If \(\End_{\CA}(X)\) is local, then \(X\) is indecomposable.
\end{lem}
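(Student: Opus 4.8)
The plan is to argue the contrapositive: if $X$ is not indecomposable, then $\End_{\CA}(X)$ is not local. First I would dispose of the trivial case: if $X = 0$, then $\End_{\CA}(X)$ is the zero ring, in which $0 = 1$, so it is not local by \cref{def:local-ring}. So assume $X \neq 0$ is decomposable, meaning there is an isomorphism $X \iso X_{1} \oplus X_{2}$ with both $X_{1} \neq 0$ and $X_{2} \neq 0$. Using the direct sum structure (as recalled in \cref{rem:on-the-definition-of-additive-category}), we obtain morphisms $i_{j}\colon X_{j} \to X$ and $p_{j}\colon X \to X_{j}$ for $j = 1,2$ with $p_{j}i_{j} = \id{X_{j}}$ and $i_{1}p_{1} + i_{2}p_{2} = \id{X}$, and $p_{1}i_{2} = 0 = p_{2}i_{1}$.

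Set $e \deff i_{1}p_{1} \in \End_{\CA}(X)$ and $f \deff i_{2}p_{2} = \id{X} - e \in \End_{\CA}(X)$. A routine check shows $e$ and $f$ are orthogonal idempotents: $e^{2} = i_{1}(p_{1}i_{1})p_{1} = i_{1}p_{1} = e$, similarly $f^{2} = f$, and $ef = i_{1}(p_{1}i_{2})p_{2} = 0$, $fe = 0$. Neither is zero: if $e = i_{1}p_{1} = 0$, then composing with $p_{1}$ on the left and $i_{1}$ on the right gives $\id{X_{1}} = p_{1}(i_{1}p_{1})i_{1} = 0$, forcing $X_{1} = 0$, a contradiction; and symmetrically $f \neq 0$. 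Now the key point: $e$ and $f = 1 - e$ are both non-units in $\End_{\CA}(X)$. Indeed, if $e$ were a unit, then from $ef = 0$ we could left-multiply by $e^{-1}$ to get $f = 0$, a contradiction; similarly if $f$ were a unit then $e = 0$. Thus $e$ and $1 - e$ are two non-units whose sum is the unit $1 = \id{X}$, so $\End_{\CA}(X)$ fails the defining property of a local ring (indeed this also contradicts \cref{rem:local-ring}, which says in a local ring one of $x$, $1-x$ is invertible). Hence $\End_{\CA}(X)$ is not local.

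I do not anticipate a serious obstacle here; the argument is elementary manipulation of the idempotents arising from a direct sum decomposition together with the definition of a local ring. The only point requiring a moment's care is verifying that $e \neq 0$ and $f \neq 0$ — i.e., that a nontrivial decomposition really produces nonzero idempotents — which is handled by sandwiching with the appropriate $i_{j}$ and $p_{j}$ to recover the identity on $X_{j}$.
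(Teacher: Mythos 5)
Your proof is correct and rests on the same mechanism as the paper's: the orthogonal idempotents $e_{j}=i_{j}p_{j}$ arising from a decomposition $X=X_{1}\oplus X_{2}$ and their incompatibility with locality. The only (immaterial) differences are that you argue by contraposition and verify directly from \cref{def:local-ring} that two nonzero orthogonal idempotents are non-units, whereas the paper argues forwards and invokes \cref{lem:local-ring-has-0-1-idempotents-only} to force one $e_{j}$ to vanish.
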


\begin{proof}
Suppose \(\End_{\CA}(X)\) is local and that \(X=X_{1}\oplus X_{2}\). 
Note that \(\End_{\CA}(X)\neq0\) (see \cref{def:local-ring}) so, 
in particular, we see that \(\id{X}\neq0\) and
hence \(X\neq0\). Let \(p_{j}\colon X\onto X_{j}\), respectively,
\(i_{j}\colon X_{j}\into X\), be the canonical projection, respectively,
inclusion for $j=1,2$. Then \(e_{j}\deff i_{j}p_{j}\in\End_{\CA}(X)\) is idempotent
and so we must have \(e_{j}\) is \(0\) or \(\id{X}\) for \(j=1,2\) by Lemma
\ref{lem:local-ring-has-0-1-idempotents-only}.
If \(e_{1}=0=e_{2}\), then we would have \(\id{X}=e_{1}+e_{2}=0\), which
is a contradiction. Hence, without loss of generality, \(e_{1}\neq0\)
and so \(e_{1}=\id{X}\). 
Moreover, this yields $e_{2} = 0$ and hence \(X_{2}=0\), so \(X\) is indecomposable.
\end{proof}

Let us now state the main definition of this section.

\begin{defn}
\label{def:Krull-Schmidt-category}
A finite direct sum decomposition 
\(X = X_{1}\oplus\cdots\oplus X_{n}\)
of $X\in\CA$, 
where \(\End_{\CA}(X_{j})\) is a local ring for all \(1\leq j\leq n\), 
is called a \emph{Krull-Remak-Schmidt decomposition} of \(X\). 
Two Krull-Remak-Schmidt decompositions 
$X= X_{1}\oplus\cdots\oplus X_{n}$ 
and 
$X = Y_{1}\oplus\cdots\oplus Y_{m}$ 
of $X$ are said to be \emph{equivalent} if 
\(m=n\) and there is a permutation \(\sigma\in\sym(n)\) such that 
\(X_{j}\iso Y_{\sigma(j)}\) for all \(1\leq j \leq n\).

If every object in $\CA$ admits a Krull-Remak-Schmidt decomposition, then $\CA$ is known as a \emph{Krull-Schmidt} category. 
\end{defn}

\begin{rem}
\label{rem:zero-object-has-KRS-decomp}
Notice that in an additive category $\CA$, a zero object is the direct sum of an empty (and hence finite) family of objects each having a local endomorphism ring. 
\end{rem}

An immediate consequence of \cref{def:Krull-Schmidt-category} is the following.

\begin{lem}
\label{lem:in-KS-category-indecomposable-iff-local-endo-ring} 
In a Krull-Schmidt category \(\CA\), an object \(X\in\CA\) is indecomposable 
if and only if \(\End_{\CA}(X)\) is local.
\end{lem}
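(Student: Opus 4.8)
The plan is to establish the two implications separately. The implication ``\(\End_{\CA}(X)\) local \(\Rightarrow\) \(X\) indecomposable'' is exactly \cref{lem:local-endo-ring-implies-indecomposable}, so nothing further is required there. The content is the converse, and for it I would exploit the defining feature of a Krull-Schmidt category together with indecomposability to force the Krull-Remak-Schmidt decomposition of \(X\) to have a single summand.

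Concretely, first I would apply \cref{def:Krull-Schmidt-category} to fix a Krull-Remak-Schmidt decomposition \(X = X_{1}\oplus\cdots\oplus X_{n}\) with \(\End_{\CA}(X_{j})\) local for each \(j\). Since \(X\) is indecomposable it is non-zero, so the empty decomposition (cf.\ \cref{rem:zero-object-has-KRS-decomp}) is excluded and \(n\geq 1\). I would then rule out \(n\geq 2\) by contradiction: regrouping as \(X = X_{1}\oplus(X_{2}\oplus\cdots\oplus X_{n})\), indecomposability of \(X\) forces \(X_{1} = 0\) or \(X_{2}\oplus\cdots\oplus X_{n} = 0\); but \(\End_{\CA}(X_{1})\) being local entails \(0\neq 1\) in that ring, hence \(\id{X_{1}}\neq 0\) and \(X_{1}\neq 0\), so \(X_{2}\oplus\cdots\oplus X_{n} = 0\). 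As \(X_{2}\) is then a direct summand of a zero object we get \(X_{2} = 0\), contradicting locality of \(\End_{\CA}(X_{2})\). Therefore \(n = 1\), whence \(X = X_{1}\) and \(\End_{\CA}(X) = \End_{\CA}(X_{1})\) is local.

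I do not expect any genuine obstacle: the argument is essentially bookkeeping on direct summands. The only points needing a little care are the boundary case \(n = 0\) (disposed of by \(X\neq 0\)) and the elementary observation that an object with local—hence non-zero—endomorphism ring is itself non-zero, which is what makes the collapse to \(n = 1\) go through.
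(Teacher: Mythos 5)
Your proof is correct and follows the same route as the paper: one direction is \cref{lem:local-endo-ring-implies-indecomposable}, and for the converse you take a Krull-Remak-Schmidt decomposition and use indecomposability to force \(n=1\). The paper simply asserts ``we must have \(n=1\)'' where you carefully justify it via the non-vanishing of summands with local endomorphism rings, so your write-up is just a more detailed version of the same argument.
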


\begin{proof}
\cref{lem:local-endo-ring-implies-indecomposable} treats one direction,
so we suppose \(X\) is indecomposable and show that \(\End_{\CA}(X)\)
is local. As \(\CA\) is Krull-Schmidt, we have a decomposition \(X=X_{1}\oplus\cdots\oplus X_{n}\),
where each \(\End_{\CA}(X_{j})\) is local. 
However, we must have \(n=1\) as $X$ is indecomposable, and so 
\(\End_{\CA}(X)=\End_{\CA}(X_{1})\) is local.
\end{proof}

The next proposition follows from \cite[Prop.\ 4.1]{Krause-KS-cats-and-projective-covers}.

\begin{prop}
\label{prop:Kr15-prop-4-1-semi-perfect-equivalent-conditions}
The following are equivalent for a ring \(\Lambda\). 
\begin{enumerate}[label=\textup{(\roman*)}]
	\item\label{item:projLambda-is-KS} The category \(\rproj{\Lambda}\) is Krull-Schmidt.
	\item\label{item:KRS-decomp-of-Lambda} The right \(\Lambda\)-module \(\Lambda_{\Lambda}\) admits a decomposition 
	\(\Lambda_{\Lambda} = P_1\oplus \cdots \oplus P_n\), where each \(P_{j}\) has a local endomorphism ring.
\end{enumerate}
\end{prop}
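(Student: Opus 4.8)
The plan is to prove the two implications separately, exploiting that $\rproj{\Lambda}$ has split idempotents (\cref{rem:projR-has-split-idems}) and that $\Lambda_{\Lambda} = \End_{\rMod{\Lambda}}(\Lambda_{\Lambda})^{\op} \cong \End_{\rproj{\Lambda}}(\Lambda)$ as rings via the corner-ring isomorphism \eqref{eqn:isomorphism-Phi} with $e = 1$, so that direct sum decompositions of $\Lambda_{\Lambda}$ correspond to complete sets of orthogonal idempotents in $\End_{\rproj{\Lambda}}(\Lambda)$, and decompositions into objects with local endomorphism rings correspond to such sets whose idempotents are primitive (via \cref{lem:Ae-indecom-iff-e-primitive-iff-eAe-local} applied inside $\Lambda$ together with \cref{lem:strong-indecomposable-gives-prim-idempotent} and \cref{lem:local-endo-ring-implies-indecomposable}, or more directly by noting $\End_{\rMod{\Lambda}}(P_j) \cong e_j \Lambda e_j$ when $P_j = e_j\Lambda$).

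First I would show \ref{item:projLambda-is-KS} $\Rightarrow$ \ref{item:KRS-decomp-of-Lambda}. Since $\Lambda$ is a finitely generated projective right $\Lambda$-module, it lies in $\rproj{\Lambda}$, and by hypothesis it admits a Krull-Remak-Schmidt decomposition $\Lambda_{\Lambda} = P_1 \oplus \cdots \oplus P_n$ with each $\End_{\rproj{\Lambda}}(P_j) = \End_{\rMod{\Lambda}}(P_j)$ local. This is exactly the statement of \ref{item:KRS-decomp-of-Lambda}, so this direction is essentially immediate once one observes $\Lambda \in \rproj{\Lambda}$.

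For the converse \ref{item:KRS-decomp-of-Lambda} $\Rightarrow$ \ref{item:projLambda-is-KS}, take the given decomposition $\Lambda_{\Lambda} = P_1 \oplus \cdots \oplus P_n$ with each $\End_{\rMod{\Lambda}}(P_j)$ local. Writing $e_j \in \Lambda$ for the orthogonal idempotents with $P_j = e_j\Lambda$ and $e_1 + \cdots + e_n = 1$, each $e_j$ is primitive: indeed $\End_{\rMod{\Lambda}}(e_j\Lambda) \cong e_j\Lambda e_j$ is local, hence has only idempotents $0$ and $e_j$ by \cref{lem:local-ring-has-0-1-idempotents-only}, so $e_j$ is primitive by \cref{lem:Ae-indecom-iff-e-primitive-iff-eAe-local}. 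Now an arbitrary $P \in \rproj{\Lambda}$ is a direct summand of $\Lambda^m$ for some $m$, so $P \oplus Q \cong \Lambda^m \cong \bigoplus_{j,k} P_j^{(k)}$; each $P_j$ is indecomposable with local endomorphism ring. I would then invoke the classical Krull-Remak-Schmidt uniqueness theorem for the category $\add{\Lambda} = \rproj{\Lambda}$ (which has split idempotents) — or, to stay self-contained within the cited framework, \cite[Prop.\ 4.1]{Krause-KS-cats-and-projective-covers} directly — to conclude that $P$, being a direct summand of a finite direct sum of the indecomposables $P_j$ with local endomorphism rings, itself decomposes as a finite direct sum of (copies of) such $P_j$. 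Hence every object of $\rproj{\Lambda}$ has a Krull-Remak-Schmidt decomposition and $\rproj{\Lambda}$ is Krull-Schmidt.

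The main obstacle is the converse direction: passing from a single good decomposition of $\Lambda$ to a good decomposition of an arbitrary finitely generated projective $P$. The key point making this work is that $P$ is a summand of some $\Lambda^m$, together with the fact that a direct summand of a finite direct sum of objects with local endomorphism rings again decomposes that way in an idempotent-complete additive category — this is precisely the content of the Krull-Remak-Schmidt exchange/refinement machinery, which is available here through \cite{Krause-KS-cats-and-projective-covers} and need not be reproved.
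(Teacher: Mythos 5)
The paper does not actually prove this proposition---it records it as a direct citation of \cite[Prop.\ 4.1]{Krause-KS-cats-and-projective-covers}, whose own proof goes through projective covers over semi-perfect rings---so your proposal is the more explicit of the two. Your implication \ref{item:projLambda-is-KS} $\Rightarrow$ \ref{item:KRS-decomp-of-Lambda} is correct and immediate, since \(\Lambda_{\Lambda}\in\rproj{\Lambda}\) and \(\rproj{\Lambda}\) is full in \(\rMod{\Lambda}\). For the converse you identify the right reduction (any \(P\in\rproj{\Lambda}\) is a summand of \(\Lambda^{m}\iso\bigoplus_{j}P_{j}^{m}\)), but be careful about which tool closes the gap: \cref{cor:Kr15-cor-4-3-reorder-decomposition} cannot be invoked, because as stated it presupposes the ambient category is already Krull-Schmidt, which is exactly what you are trying to prove; \cref{thm:Kr15-Thm-4-2-uniqueness-of-KRS-decomposition} only compares two existing Krull-Remak-Schmidt decompositions of one object and does not manufacture one for a summand; and citing \cite[Prop.\ 4.1]{Krause-KS-cats-and-projective-covers} ``directly'' is a restatement of the proposition rather than a proof (though, admittedly, that is all the paper does). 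The statement you actually need is the finite Krull-Remak-Schmidt-Azumaya exchange theorem: a direct summand of \(M_{1}\oplus\cdots\oplus M_{n}\) with each \(\End(M_{i})\) local is itself a direct sum of modules isomorphic to some of the \(M_{i}\). That is a genuine, nontrivial classical result (available here since \(\rproj{\Lambda}\sse\rMod{\Lambda}\) and \(\rproj{\Lambda}\) has split idempotents by \cref{rem:projR-has-split-idems}), and once you name it---or substitute Krause's projective-cover argument---your outline is complete and correct.
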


\begin{defn}
\label{def:semi-perfect-ring}
If a ring \(\Lambda\) satisfies the equivalent conditions of \cref{prop:Kr15-prop-4-1-semi-perfect-equivalent-conditions},
then it is called \emph{semi-perfect}.
\end{defn}

Notice that for a semi-perfect ring $\Lambda$, the decomposition in \cref{prop:Kr15-prop-4-1-semi-perfect-equivalent-conditions}\ref{item:KRS-decomp-of-Lambda} is a Krull-Remak-Schmidt decomposition in $\rproj{\Lambda}\sse \rMod{\Lambda}$.

As remarked in \cite{Krause-KS-cats-and-projective-covers}, the following theorem is a consequence of the existence and uniqueness of projective covers over a semi-perfect ring.

\begin{thm}
\label{thm:Kr15-Thm-4-2-uniqueness-of-KRS-decomposition}
\emph{\cite[Thm.\ 4.2]{Krause-KS-cats-and-projective-covers}}
Let \(X\in\CA\) be an object. 
Suppose 
\(
X_{1}\oplus\cdots\oplus X_{n}
	= X 
	= Y_{1}\oplus\cdots\oplus Y_{m}
\)
for some objects \(X_{j},Y_{l}\) each having a local endomorphism ring. 
Then 
these two Krull-Remak-Schmidt decompositions are equivalent.
\end{thm}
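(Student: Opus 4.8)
The plan is to transport the whole problem into the module category of the ring $\Lambda \deff \End_{\CA}(X)$ via the Hom-functor, and then to appeal to the uniqueness of projective covers over a semi-perfect ring. First I would note that each $X_{j}$ is a direct summand of $X = X_{1}\oplus\cdots\oplus X_{n}$, so $X_{j}\in\add{X}$, and likewise $Y_{l}\in\add{X}$ for all $l$. By \cref{prop:Kr15-Prop-2-3-hom-functor-induces-equivalence-to-projectives} the functor $H_{X} = \Hom_{\CA}(X,-)$ restricts to a fully faithful additive functor $\add{X}\to\rproj{\Lambda}$, and for each $X_{0}\in\add{X}$ it induces a ring isomorphism $\End_{\CA}(X_{0})\overset{\iso}{\longrightarrow}\End_{\rMod{\Lambda}}(H_{X}(X_{0}))$. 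Applying $H_{X}$ to the two given decompositions of $X$ and using additivity of $H_{X}$, I obtain two direct sum decompositions
\[
H_{X}(X_{1})\oplus\cdots\oplus H_{X}(X_{n}) = H_{X}(X) = \Lambda_{\Lambda} = H_{X}(Y_{1})\oplus\cdots\oplus H_{X}(Y_{m})
\]
in $\rproj{\Lambda}$, in which every summand has a local endomorphism ring, since the displayed ring isomorphism carries the local rings $\End_{\CA}(X_{j})$ and $\End_{\CA}(Y_{l})$ over.

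In particular, the leftmost decomposition exhibits $\Lambda_{\Lambda}$ as a finite direct sum of modules with local endomorphism rings, so $\Lambda$ is semi-perfect by \cref{prop:Kr15-prop-4-1-semi-perfect-equivalent-conditions} and \cref{def:semi-perfect-ring}. Now I would invoke the module-theoretic input: over a semi-perfect ring, any two Krull-Remak-Schmidt decompositions of a finitely generated projective module---here $\Lambda_{\Lambda}$---are equivalent. This is a standard consequence of the existence and uniqueness of projective covers over $\Lambda$ (together with the uniqueness of the decomposition of the semisimple module $\Lambda/\rad\Lambda$ into simples); we take it from \cite{Krause-KS-cats-and-projective-covers}. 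It yields $m = n$ and a permutation $\sigma\in\sym(n)$ with $H_{X}(X_{j})\iso H_{X}(Y_{\sigma(j)})$ in $\rMod{\Lambda}$ for each $1\leq j\leq n$.

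Finally, it remains to lift these isomorphisms back to $\CA$. Since $H_{X}$ is fully faithful on $\add{X}$ it reflects isomorphisms, and $X_{j},Y_{\sigma(j)}\in\add{X}$, so $H_{X}(X_{j})\iso H_{X}(Y_{\sigma(j)})$ forces $X_{j}\iso Y_{\sigma(j)}$ in $\CA$. Hence the two Krull-Remak-Schmidt decompositions of $X$ are equivalent in the sense of \cref{def:Krull-Schmidt-category}. I expect the only genuinely non-formal ingredient to be the module-theoretic uniqueness over the semi-perfect ring $\Lambda$, which rests on projective cover theory; everything else is bookkeeping with the fully faithful functor $H_{X}$, the one point needing care being that $H_{X}$, being additive, sends direct sum decompositions to direct sum decompositions and---through the ring isomorphism of \cref{prop:Kr15-Prop-2-3-hom-functor-induces-equivalence-to-projectives}---preserves locality of endomorphism rings, so that the transported decompositions really do verify the hypotheses of the module-theoretic statement.
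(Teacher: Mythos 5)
Your proposal is correct and follows essentially the same route as the paper, which gives no proof of its own but defers to \cite[Thm.\ 4.2]{Krause-KS-cats-and-projective-covers}, remarking that the result is a consequence of the existence and uniqueness of projective covers over the semi-perfect ring $\End_{\CA}(X)$. Your reduction via the fully faithful functor $\Hom_{\CA}(X,-)\colon\add{X}\to\rproj{\Lambda}$ and the lifting of isomorphisms back along it is exactly the intended argument.
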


There are two immediate consequences. 

\begin{cor}
\label{cor:Kr15-cor-4-3-reorder-decomposition}
\emph{\cite[Cor.\ 4.3]{Krause-KS-cats-and-projective-covers}}
If \(\CA\) is Krull-Schmidt and there are two decompositions 
\(
X_{1}\oplus\cdots\oplus X_{n}
	= X
	= X'\oplus X''
\), 
where each \(X_{j}\) is indecomposable, then there exists \(t\leq n\)
such that \(X = X_{1}\oplus\cdots\oplus X_{t}\oplus X'\) (possibly
after reindexing).
\end{cor}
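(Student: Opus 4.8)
The plan is to reduce the statement to the uniqueness result \cref{thm:Kr15-Thm-4-2-uniqueness-of-KRS-decomposition} by first refining the coarse decomposition $X = X'\oplus X''$ into indecomposable summands.

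Since $\CA$ is Krull-Schmidt, both $X'$ and $X''$ admit Krull-Remak-Schmidt decompositions $X' = X'_{1}\oplus\cdots\oplus X'_{a}$ and $X'' = X''_{1}\oplus\cdots\oplus X''_{b}$, where we allow the empty decomposition when $X'$ or $X''$ is a zero object (see \cref{rem:zero-object-has-KRS-decomp}); in particular every summand appearing here has a local endomorphism ring. On the other hand each $X_{j}$ is indecomposable, so $\End_{\CA}(X_{j})$ is local by \cref{lem:in-KS-category-indecomposable-iff-local-endo-ring}. Reordering the summands of $X'\oplus X''$, we therefore have two Krull-Remak-Schmidt decompositions
\[
X_{1}\oplus\cdots\oplus X_{n} = X = X''_{1}\oplus\cdots\oplus X''_{b}\oplus X'_{1}\oplus\cdots\oplus X'_{a}
\]
of the same object $X$.

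Next I would apply \cref{thm:Kr15-Thm-4-2-uniqueness-of-KRS-decomposition} to these two decompositions. It yields $n = a+b$ together with a permutation in $\sym(n)$ matching the summands up to isomorphism; reindexing $X_{1},\ldots,X_{n}$ along this permutation we may assume $X_{j}\iso X''_{j}$ for $1\leq j\leq b$ and $X_{b+k}\iso X'_{k}$ for $1\leq k\leq a$. Put $t\deff b$, so that $t\leq n$ and $X_{1}\oplus\cdots\oplus X_{t}\iso X''_{1}\oplus\cdots\oplus X''_{b} = X''$. Reordering summands once more gives
\[
X_{1}\oplus\cdots\oplus X_{t}\oplus X' \iso X''\oplus X' = X,
\]
which is the assertion.

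This is essentially careful bookkeeping on top of \cref{thm:Kr15-Thm-4-2-uniqueness-of-KRS-decomposition}, so I do not expect a serious obstacle; the points needing attention are invoking \cref{lem:in-KS-category-indecomposable-iff-local-endo-ring} to upgrade ``indecomposable'' to ``local endomorphism ring'' before \cref{thm:Kr15-Thm-4-2-uniqueness-of-KRS-decomposition} can be applied, and handling the degenerate cases $X' = 0$ or $X'' = 0$ through empty decompositions. If instead one reads the displayed equality strictly, as an \emph{internal} direct sum decomposition of $X$ using the given subobject $X'\into X$, then the only extra ingredient is a routine exchange argument in the spirit of the classical Krull-Schmidt theorem (cf.\ \cite{Krause-KS-cats-and-projective-covers}): one uses that each $X''_{l}$ has a local endomorphism ring to peel off $X''_{1},\ldots,X''_{b}$ from $X$ one at a time, replacing them by suitable $X_{j}$'s, and then verifies that $X_{1},\ldots,X_{t}$ together with $X'$ satisfy the universal property of the coproduct inside $X$.
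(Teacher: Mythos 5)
Your argument is correct and is exactly the deduction the paper intends: the corollary is stated without proof as an ``immediate consequence'' of \cref{thm:Kr15-Thm-4-2-uniqueness-of-KRS-decomposition} (citing Krause), and refining $X'\oplus X''$ into Krull--Remak--Schmidt decompositions, upgrading the indecomposable $X_{j}$ to objects with local endomorphism rings via \cref{lem:in-KS-category-indecomposable-iff-local-endo-ring}, and matching summands by the uniqueness theorem is precisely that deduction. Your handling of the degenerate cases and your remark that the displayed equality is to be read up to isomorphism of decompositions (as the paper's stated convention allows) are both appropriate.
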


\begin{cor}
\label{cor:Kr15-Cor-4-4-KS-category-iff-split-idems-and-semi-perfect-endo-rings}
\emph{\cite[Thm.\ A.1]{ChenYeZhang-Algebras-of-derived-dimension-zero}, \cite[Cor.\ 4.4]{Krause-KS-cats-and-projective-covers}}
An additive category \(\CA\) 
is Krull-Schmidt, 
if and only if 
it has split idempotents and \(\End_{\CA}(X)\) is semi-perfect for all \(X\in\CA\).
\end{cor}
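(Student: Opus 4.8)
The plan is to prove both implications by transporting direct-sum decompositions along the functor $H_{X}(-)=\Hom_{\CA}(X,-)$ of \cref{prop:Kr15-Prop-2-3-hom-functor-induces-equivalence-to-projectives}, using \cref{prop:Kr15-prop-4-1-semi-perfect-equivalent-conditions} and \cref{def:semi-perfect-ring} as a dictionary between decompositions of an object $X\in\CA$ and decompositions of the right module $\Lambda_{\Lambda}$ over $\Lambda\deff\End_{\CA}(X)$.

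For the implication ``$\Leftarrow$'', I would fix $X\in\CA$, put $\Lambda\deff\End_{\CA}(X)$, and use semi-perfectness of $\Lambda$ to write $\Lambda_{\Lambda}=P_{1}\oplus\cdots\oplus P_{n}$ in $\rproj{\Lambda}$ with each $\End_{\rMod{\Lambda}}(P_{j})$ local. Since $\CA$ has split idempotents, \cref{prop:Kr15-Prop-2-3-hom-functor-induces-equivalence-to-projectives} makes $H_{X}\colon\add{X}\to\rproj{\Lambda}$ an equivalence, so I may choose $X_{j}\in\add{X}$ with $H_{X}(X_{j})\iso P_{j}$; then $H_{X}(X_{1}\oplus\cdots\oplus X_{n})\iso P_{1}\oplus\cdots\oplus P_{n}=\Lambda_{\Lambda}=H_{X}(X)$ forces $X\iso X_{1}\oplus\cdots\oplus X_{n}$ by full faithfulness, and the ring isomorphism $F_{X_{j}}$ of \cref{prop:Kr15-Prop-2-3-hom-functor-induces-equivalence-to-projectives} makes $\End_{\CA}(X_{j})\iso\End_{\rMod{\Lambda}}(P_{j})$ local. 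Hence every object of $\CA$ admits a Krull-Remak-Schmidt decomposition.

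For ``$\Rightarrow$'' I would first dispatch the semi-perfect condition. Assuming $\CA$ Krull-Schmidt, fix $X\in\CA$, a Krull-Remak-Schmidt decomposition $X=X_{1}\oplus\cdots\oplus X_{n}$ with canonical maps $i_{j},p_{j}$, and set $\Lambda\deff\End_{\CA}(X)$, $\epsilon_{j}\deff i_{j}p_{j}$. Then $\{\epsilon_{j}\}$ is a complete orthogonal family of idempotents, so $\Lambda_{\Lambda}=\epsilon_{1}\Lambda\oplus\cdots\oplus\epsilon_{n}\Lambda$ and $H_{X}$ carries $X=X_{1}\oplus\cdots\oplus X_{n}$ to this decomposition, giving $H_{X}(X_{j})\iso\epsilon_{j}\Lambda$. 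As $\phi\mapsto i_{j}\phi p_{j}$ is a ring isomorphism $\End_{\CA}(X_{j})\overset{\iso}{\to}\epsilon_{j}\Lambda\epsilon_{j}$, the corner ring $\epsilon_{j}\Lambda\epsilon_{j}$ is local, whence each $\End_{\rMod{\Lambda}}(\epsilon_{j}\Lambda)$ is local by \eqref{eqn:isomorphism-Phi}; so \cref{prop:Kr15-prop-4-1-semi-perfect-equivalent-conditions} together with \cref{def:semi-perfect-ring} exhibits $\Lambda$ as semi-perfect. Now for split idempotents, given an idempotent $e\in\Lambda$ I would decompose $e\Lambda$ and $(1-e)\Lambda$ into indecomposables with local endomorphism rings (possible since $\rproj{\Lambda}$ is Krull-Schmidt), compare the resulting decomposition of $\Lambda_{\Lambda}=e\Lambda\oplus(1-e)\Lambda$ with $\epsilon_{1}\Lambda\oplus\cdots\oplus\epsilon_{n}\Lambda$ via the uniqueness statement \cref{thm:Kr15-Thm-4-2-uniqueness-of-KRS-decomposition} to conclude that each indecomposable summand of $e\Lambda$ is isomorphic to some $\epsilon_{j}\Lambda\iso H_{X}(X_{j})$, and hence deduce $e\Lambda\iso H_{X}(Z)$ for some $Z\in\add{X}$. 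The endomorphism $H_{X}(e)$ of $H_{X}(X)=\Lambda_{\Lambda}$ is left multiplication by $e$, i.e.\ the split idempotent $\Lambda_{\Lambda}\onto e\Lambda\into\Lambda_{\Lambda}$; composing its canonical factorisation with an isomorphism $H_{X}(Z)\iso e\Lambda$ yields $\wt{r}\colon H_{X}(X)\to H_{X}(Z)$ and $\wt{s}\colon H_{X}(Z)\to H_{X}(X)$ with $\wt{s}\wt{r}=H_{X}(e)$ and $\wt{r}\wt{s}=\id{H_{X}(Z)}$, and by full faithfulness of $H_{X}$ on $\add{X}$ these lift to $r\colon X\to Z$ and $s\colon Z\to X$ with $sr=e$ and $rs=\id{Z}$, so $e$ splits.

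I expect the last step to be the main obstacle: unlike in a module category there is no a priori image object for $e$ in $\CA$, so the crux is recognising the summand $e\Lambda\sse\Lambda_{\Lambda}$ as lying in the essential image of $H_{X}|_{\add{X}}$, which is precisely where the full Krull-Schmidt hypothesis enters through \cref{thm:Kr15-Thm-4-2-uniqueness-of-KRS-decomposition}. Semi-perfectness of every endomorphism ring is by itself insufficient to split idempotents --- for instance, the additive category of finite-dimensional $\field$-vector spaces of even dimension has semisimple endomorphism rings yet does not have split idempotents. One should also keep an eye on the trivial edge case $X=0$ and record the routine verification that $\epsilon_{j}\Lambda\epsilon_{j}\iso\End_{\CA}(X_{j})$.
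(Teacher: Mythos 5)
Your proof is correct and follows essentially the same route as the paper: both directions rest on the fully faithful functor $\Hom_{\CA}(X,-)\colon\add{X}\to\rproj{\Lambda_{X}}$, the characterisation of semi-perfectness via a decomposition of $(\Lambda_{X})_{\Lambda_{X}}$ into summands with local endomorphism rings, and the uniqueness of Krull--Remak--Schmidt decompositions to recognise $e\Lambda_{X}$ as lying in the essential image. The only cosmetic difference is that the paper first upgrades the functor to an equivalence $\add{X}\simeq\rproj{\Lambda_{X}}$ (via density, using \cref{cor:Kr15-cor-4-3-reorder-decomposition}) and then transports split idempotents wholesale from $\rproj{\Lambda_{X}}$, whereas you lift the splitting of each idempotent individually through full faithfulness.
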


\begin{proof}
If \(\CA\) has split idempotents and \(\Lambda_{X}\deff\End_{\CA}(X)\) is semi-perfect
for all \(X\in\CA\), then by Propositions \ref{prop:Kr15-Prop-2-3-hom-functor-induces-equivalence-to-projectives}
and \ref{prop:Kr15-prop-4-1-semi-perfect-equivalent-conditions} we have that \(\CA\) is Krull-Schmidt. Indeed, a Krull-Remak-Schmidt decomposition of 
$\Lambda_{X}$ 
as in 
\cref{prop:Kr15-prop-4-1-semi-perfect-equivalent-conditions}\ref{item:KRS-decomp-of-Lambda} yields a Krull-Remak-Schmidt decomposition of $X$ in $\CA$ 
using the 
equivalence $\add{X} \simeq \rproj{\Lambda_{X}}$ 
induced by $\Hom_{\CA}(X,-)$. 

Conversely, suppose \(\CA\) is a Krull-Schmidt category and let \(X\in\CA\)
be arbitrary. By hypothesis, we may take a finite direct sum decomposition \(X=X_{1}\oplus\cdots\oplus X_{n}\)
such that \(\End_{\CA}(X_{j})\) is local for each \(j\). 
We have 
\(
(\Lambda_{X})_{\Lambda_{X}}
	= \End_{\CA}(X)
	\iso \Hom_{\CA}(X,X_{1})\oplus\cdots\oplus\Hom_{\CA}(X,X_{n})
\)
and \(\End_{\rMod{\Lambda_{X}}}(\Hom_{\CA}(X,X_{j}))\iso\End_{\CA}(X_{j})\)
is local (using the fully faithfulness in \cref{prop:Kr15-Prop-2-3-hom-functor-induces-equivalence-to-projectives}). Therefore,
\(\Lambda_{X}\) is a semi-perfect ring, and equivalently \(\rproj{\Lambda}_{X}\)
is Krull-Schmidt by \cref{prop:Kr15-prop-4-1-semi-perfect-equivalent-conditions}. Therefore,
given any finitely generated projective \(\Lambda_{X}\)-module, it
will isomorphic to a direct summand of \(\Lambda_{X}^{m}=\Hom_{\CA}(X,X_{1})^{m}\oplus\cdots\oplus\Hom_{\CA}(X,X_{n})^{m}\)
by \cref{cor:Kr15-cor-4-3-reorder-decomposition}. 
This means that $\Hom_{\CA}(X,-)\colon \add{X} \to \rproj{\Lambda_{X}}$ is also dense, and hence an equivalence. 
The category \(\rproj{\Lambda}_{X}\) has split idempotents 
(see \cref{rem:projR-has-split-idems})
and hence so does \(\add{X}\). 
Moreover, this implies \(\CA\) has split idempotents. 
\end{proof}

We close this section with two results related to semi-perfect rings 
that will be needed for the main result of the paper. 
Although a stronger version of the next lemma can be found in \cite[Thm.\ 27.6]{AndersonFuller-rings-and-cats-of-modules}, 
the proof below is inspired by that of Auslander--Reiten--Smal\o{} \cite[Prop.\ I.4.8]{AuslanderReitenSmalo-rep-theory-of-artin-algebras}.

\begin{lem}
\label{lem:semi-perfect-ring-admits-complete-set-orthog-primitive-idems}
Suppose \(\Lambda\) is semi-perfect and that 
\(\Lambda=P_{1}\oplus\cdots\oplus P_{n}\) 
as right \(\Lambda\)-modules, 
where \(\End_{\rMod{\Lambda}}(P_{j})\) is local for \(1\leq j\leq n\). 
Then \(\Lambda\) admits a complete set \(\{e_{j}\}_{j=1}^{n}\) of primitive orthogonal idempotents, 
such that $P_{j} = e_{j}\Lambda$ and \(e_{j}\Lambda e_{j}\) is local for \(1\leq j\leq n\).
\end{lem}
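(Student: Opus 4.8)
The plan is to extract the idempotents from the given decomposition of $\Lambda_{\Lambda}$ and then verify each claimed property using the results already established for idempotents in rings.

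First I would write the decomposition $\Lambda = P_{1}\oplus\cdots\oplus P_{n}$ as an internal direct sum of right ideals, so that the inclusions $P_{j}\into \Lambda$ and projections $\Lambda \onto P_{j}$ give rise to elements $e_{j}\deff$ (image of $1_{\Lambda}$ under $\Lambda \onto P_{j} \into \Lambda$) $\in\Lambda$. By the standard correspondence between module decompositions of $\Lambda_{\Lambda}$ and complete sets of orthogonal idempotents (mirroring the categorical statement in \cref{prop:decomp-of-object-when-idem-splits}, or directly: writing $1 = e_{1}+\cdots+e_{n}$ with $e_{i} \in P_{i}$ and using that the sum is direct), the set $\{e_{j}\}_{j=1}^{n}$ is a complete orthogonal set of idempotents with $e_{j}\Lambda = P_{j}$.

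Next I would establish locality of the corner rings. For each $j$, \eqref{eqn:isomorphism-Phi} gives a ring isomorphism $\End_{\rMod{\Lambda}}(e_{j}\Lambda) \iso e_{j}\Lambda e_{j}$, so $e_{j}\Lambda e_{j} \iso \End_{\rMod{\Lambda}}(P_{j})$, which is local by hypothesis. Then primitivity of $e_{j}$ follows immediately: since $e_{j}\Lambda e_{j}$ is local, its only idempotents are $0$ and $e_{j}$ by \cref{lem:local-ring-has-0-1-idempotents-only}, and this is condition \ref{item:corner-ring-is-local} of \cref{lem:Ae-indecom-iff-e-primitive-iff-eAe-local}, which is equivalent to $e_{j}$ being primitive. (Alternatively, primitivity follows from indecomposability of $P_{j} = e_{j}\Lambda$ via the same lemma, since $\End_{\rMod{\Lambda}}(P_{j})$ local forces $P_{j}$ indecomposable by \cref{lem:local-endo-ring-implies-indecomposable}.)

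There is no serious obstacle here; the statement is essentially an unwinding of definitions together with the dictionary between idempotents and direct-sum decompositions. The only point requiring a little care is the bookkeeping needed to pin down that the $e_{j}$ coming from the projections really are orthogonal and satisfy $e_{j}\Lambda = P_{j}$ — i.e.\ correctly identifying the element $e_{j}\in\Lambda$ with the endomorphism $\Lambda \to \Lambda$ that is the composite of projection and inclusion, and checking $e_{j}^{2}=e_{j}$, $e_{i}e_{j}=0$ for $i\neq j$ from the directness of the sum. Everything else is a direct citation of \eqref{eqn:isomorphism-Phi}, \cref{lem:local-ring-has-0-1-idempotents-only}, and \cref{lem:Ae-indecom-iff-e-primitive-iff-eAe-local}.
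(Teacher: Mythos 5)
Your proposal is correct and follows essentially the same route as the paper's proof: write $1_{\Lambda}=e_{1}+\cdots+e_{n}$ with $e_{j}\in P_{j}$, deduce orthogonality, idempotency and $e_{j}\Lambda=P_{j}$ from the directness of the sum, then get locality of $e_{j}\Lambda e_{j}$ from the isomorphism \eqref{eqn:isomorphism-Phi} and primitivity from \cref{lem:local-ring-has-0-1-idempotents-only} and \cref{lem:Ae-indecom-iff-e-primitive-iff-eAe-local}. The only detail worth making explicit is that locality of $e_{j}\Lambda e_{j}$ forces $e_{j}\neq 0$, which is needed as a hypothesis in \cref{lem:Ae-indecom-iff-e-primitive-iff-eAe-local}.
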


\begin{proof}
Since \(\Lambda=P_{1}\oplus\cdots\oplus P_{n}\), we may express the
identity \(1_{\Lambda}\) of \(\Lambda\) as \(1_{\Lambda}=e_{1}+\cdots+e_{n}\)
for some elements \(e_{j}\in P_{j}\). 
Fix $l\in\{1,\ldots,n\}$. 
Then we have 
\(
e_{1}e_{l}+\cdots+e_{n}e_{l}
	= (e_{1}+\cdots+e_{n})e_{l} 
	= 1_{\Lambda} \cdot e_{l} 
	= e_{l} 
	\in P_{l}
\).
But since each \(P_{j}\) is a right \(\Lambda\)-module, 
we have \(e_{j}e_{l}\in P_{j}\).
Therefore, using the direct sum decomposition of $\Lambda$, we see that \(e_{j}e_{l}=0\)
for all \(j\neq l\) and that \(e_{l}=e_{l}e_{l}\), 
i.e.\  the set \(\{e_{j}\}_{j=1}^{n}\)
forms a complete set of orthogonal idempotents.

We claim that \(e_{l}\Lambda = P_{l}\). 
Since \(e_{l}\in P_{l}\)
and \(P_{l}\) is a right \(\Lambda\)-module, we immediately see that
\(e_{l}\Lambda\subseteq P_{l}\). 
Conversely, let \(x\in P_{l}\sse \Lambda\) be arbitrary. 
By the same argument as above, \(e_{j}x=0\) for \(j\neq l\) 
and \(x=e_{l}x\in e_{l}\Lambda\), so that
we have the equality \( e_{l}\Lambda=P_{l}\). 
Lastly, using the isomorphism \eqref{eqn:isomorphism-Phi}, we have 
\(
e_{l}\Lambda e_{l} 
	\iso \End_{\rMod{\Lambda}}(e_{l}\Lambda) 
	= \End_{\rMod{\Lambda}}(P_{l}) 
\) 
is local, 
so \(e_{l}=1_{e_{l}\Lambda e_{l}}\neq0\) and \(e_{l}\) is primitive by combining Lemmas~\ref{lem:local-ring-has-0-1-idempotents-only} and \ref{lem:Ae-indecom-iff-e-primitive-iff-eAe-local}. 
\end{proof}

The following result is a consequence of Jacobson \cite[Thm.\ III.10.2]{Jacobson-structure-of-rings}. 
Jacobson states the result with the assumption that the primitive 
idempotents give rise to local corner rings. 
Here, we suppose that the ring $\Lambda$ itself is semi-perfect
so that \(\rproj{\Lambda}\) is Krull-Schmidt, and this implies the hypothesis
needed in \cite{Jacobson-structure-of-rings}. 
Furthermore, we note that the proof below appears in a pre-published version of Liu--Ng--Paquette 
\cite{LiuNgPaquette-almost-split-sequences-and-approximations}.

\begin{prop} 
\label{prop:semi-perfect-ring-the-complete-set-orthog-primitive-idems-is-essentially-unique}
Let \(\Lambda\) be a 
semi-perfect ring, and suppose 
\(\{e_{j}\}_{j=1}^{n},
\{f_{l}\}_{l=1}^{m}
\)
are complete sets of primitive orthogonal idempotents in $\Lambda$. 
Then \(m=n\),
and there exists a permutation \(\sigma\in\sym(n)\) and an invertible
element \(a\in\Lambda\) such that \(f_{\sigma(j)}=ae_{j}a^{-1}\) for \(1\leq j\leq n\).
\end{prop}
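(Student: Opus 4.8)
The plan is to pass from idempotents in $\Lambda$ to direct sum decompositions of $\Lambda_\Lambda$, apply the uniqueness of Krull-Remak-Schmidt decompositions (\cref{thm:Kr15-Thm-4-2-uniqueness-of-KRS-decomposition}), and then convert the resulting isomorphisms of modules back into conjugation by a unit. First I would use \cref{example:idempotents}\ref{item:1-e-is-idempotent} and induction to observe that a complete set $\{e_j\}_{j=1}^n$ of orthogonal idempotents gives a direct sum decomposition $\Lambda_\Lambda = e_1\Lambda \oplus \cdots \oplus e_n\Lambda$ as right $\Lambda$-modules, and similarly $\Lambda_\Lambda = f_1\Lambda\oplus\cdots\oplus f_m\Lambda$. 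Since each $e_j$ is primitive, \cref{lem:Ae-indecom-iff-e-primitive-iff-eAe-local} tells us $e_j\Lambda$ is indecomposable; because $\Lambda$ is semi-perfect, $\rproj\Lambda$ is Krull-Schmidt (\cref{prop:Kr15-prop-4-1-semi-perfect-equivalent-conditions}), so by \cref{lem:in-KS-category-indecomposable-iff-local-endo-ring} each $\End_{\rMod\Lambda}(e_j\Lambda)$ is local, and likewise for the $f_l\Lambda$. Hence both decompositions are Krull-Remak-Schmidt decompositions of $\Lambda_\Lambda$ in $\rproj\Lambda$, so \cref{thm:Kr15-Thm-4-2-uniqueness-of-KRS-decomposition} yields $m=n$ and a permutation $\sigma\in\sym(n)$ with $e_j\Lambda \iso f_{\sigma(j)}\Lambda$ as right $\Lambda$-modules for all $j$.

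Next I would upgrade this collection of module isomorphisms to a single isomorphism of $\Lambda_\Lambda$ that carries the one family of idempotents to the other. Choose right $\Lambda$-module isomorphisms $\phi_j\colon e_j\Lambda \to f_{\sigma(j)}\Lambda$; since $\Lambda_\Lambda = \bigoplus_j e_j\Lambda = \bigoplus_j f_{\sigma(j)}\Lambda$, the $\phi_j$ assemble into an automorphism $\phi\colon \Lambda_\Lambda \to \Lambda_\Lambda$ with $\phi(e_j\Lambda) = f_{\sigma(j)}\Lambda$ for each $j$. Now $\End_{\rMod\Lambda}(\Lambda_\Lambda) \iso \Lambda$ acting by left multiplication (the case $e=1$ of the isomorphism $\Phi$ in \eqref{eqn:isomorphism-Phi}), so $\phi$ is left multiplication by some element $a\in\Lambda$, and $a$ is invertible because $\phi$ is an automorphism. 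Concretely, $a = \phi(1)$, and writing $\phi^{-1}$ as left multiplication by $a^{-1}$ recovers invertibility on the nose.

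Finally I would pin down the conjugation relation. The decompositions $\Lambda_\Lambda = \bigoplus e_j\Lambda$ and $\Lambda_\Lambda = \bigoplus f_l\Lambda$ are the ones determined, respectively, by the families of idempotents $\{e_j\}$ and $\{f_l\}$: an element $x\in\Lambda$ has $x = \sum_j e_j x$ with $e_j x \in e_j\Lambda$, and similarly $x = \sum_l f_l x$. Applying $\phi$ (left multiplication by $a$) to the decomposition $1 = \sum_j e_j$ and using $\phi(e_j\Lambda) = f_{\sigma(j)}\Lambda$, we get $a = \sum_j a e_j$ with $a e_j \in f_{\sigma(j)}\Lambda$; comparing with $a = \sum_l f_l a$ (so $f_{\sigma(j)} a = a e_j$ by uniqueness of components in the direct sum) gives $f_{\sigma(j)} a = a e_j$, i.e.\ $f_{\sigma(j)} = a e_j a^{-1}$ for all $j$. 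The main obstacle is the middle step: one must check carefully that the componentwise isomorphisms $\phi_j$ really do glue to a well-defined right $\Lambda$-module automorphism of $\Lambda_\Lambda$ respecting both internal direct sum decompositions, and that the identification $\End_{\rMod\Lambda}(\Lambda_\Lambda)\iso\Lambda$ sends automorphisms exactly to units — both are routine but are where the argument could go wrong if one is cavalier about left versus right module structures.
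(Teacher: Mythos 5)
Your proposal is correct and follows essentially the same route as the paper: pass to the decompositions $\bigoplus_j e_j\Lambda = \Lambda = \bigoplus_l f_l\Lambda$, invoke the uniqueness of Krull-Remak-Schmidt decompositions to get $m=n$ and $e_j\Lambda \iso f_{\sigma(j)}\Lambda$, and then assemble these isomorphisms into a unit $a$ satisfying $ae_j = f_{\sigma(j)}a$. The only difference is presentational: where you package the componentwise isomorphisms into an automorphism $\phi$ of $\Lambda_{\Lambda}$ and set $a=\phi(1)$, the paper writes the element out explicitly as $a=\sum_j f_{\sigma(j)}b_je_j$ with inverse $\sum_l e_lc_lf_{\sigma(l)}$ and checks $aa^{-1}=a^{-1}a=1$ and $ae_ra^{-1}=f_{\sigma(r)}$ by direct computation, which amounts to the same thing.
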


\begin{proof}
Since \(\{e_{j}\}_{j=1}^{n},
\{f_{l}\}_{l=1}^{m}
\)
are complete sets of primitive orthogonal idempotents in $\Lambda$, 
we obtain \(e_{1} \Lambda \oplus\cdots\oplus e_{n}\Lambda = \Lambda=f_{1}\Lambda \oplus\cdots\oplus f_{m}\Lambda\),
where \(e_{j}\Lambda \) and \(f_{l}\Lambda\) are indecomposable
by \cref{lem:Ae-indecom-iff-e-primitive-iff-eAe-local}. 
Moreover, since \(\Lambda\) is
semi-perfect we know \(\rproj{\Lambda}\) is Krull-Schmidt and so by 
\cref{prop:Kr15-prop-4-1-semi-perfect-equivalent-conditions} we have that 
\(\End_{\rMod{\Lambda}}(e_{j}\Lambda)\) and
\(\End_{\rMod{\Lambda}}(f_{l}\Lambda)\) are local rings. 
Therefore, we may apply \cref{thm:Kr15-Thm-4-2-uniqueness-of-KRS-decomposition} so that \(m=n\)
and \(e_{j}\Lambda=f_{\sigma(j)}\Lambda \), for \(j=1,\ldots,n\), for
some permutation \(\sigma\in\sym(n)\). 

Hence, there exist \(b_{j},c_{j}\in\Lambda\), 
such that 
\(
e_{j}
	= f_{\sigma(j)} b_{j}
	= f_{\sigma(j)} b_{j} e_{j}
\) 
and 
\(
f_{\sigma(j)}
	= e_{j} c_{j} f_{\sigma(j)}
\)
for each \(1\leq j\leq n\). 
In particular, \(e_{j}=(e_{j}c_{j}f_{\sigma(j)})(f_{\sigma(j)}b_{j}e_{j})\)
and 
\begin{equation}\label{eqn:fsigmaj}
f_{\sigma(j)}=(f_{\sigma(i)}b_{j}e_{j})(e_{j}c_{j}f_{\sigma(j)}). 
\end{equation}
Set
\(a\deff\sum_{j=1}^{n}f_{\sigma(j)}b_{j}e_{j}\) 
and 
\(a^{-1}\deff\sum_{l=1}^{n}e_{l}c_{l}f_{\sigma(l)}\).
We observe that 
\begin{align*}
a\cdot a^{-1}
	&= \left(\sum_{j=1}^{n}f_{\sigma(j)}b_{j}e_{j}\right)
	\left(\sum_{l=1}^{n}e_{l}c_{l}f_{\sigma(l)}\right) \\
	&= \sum_{j=1}^{n}(f_{\sigma(j)}b_{j}e_{j})(e_{j}c_{j}f_{\sigma(j)}) && \text{as \(\{e_{j}\}_{j=1}^{n}\) is orthogonal}\\
	&= \sum_{j=1}^{n}f_{\sigma(j)} &&\text{using \eqref{eqn:fsigmaj}}\\
	&= 1_{\Lambda}  &&\text{as $\{f_{j}\}_{j=1}^{m}$ is complete}. 
\end{align*}
Similarly, one can show \(a^{-1}a=1_{\Lambda}\). 
Finally,
\[
ae_{r}a^{-1}
	= \left(\sum_{j=1}^{n}f_{\sigma(j)}b_{j}e_{j}\right)e_{r}\left(\sum_{l=1}^{n}e_{l}c_{l}f_{\sigma(l)}\right)
	= (f_{\sigma(r)}b_{r}e_{r}) e_{r} (e_{r}c_{r}f_{\sigma(r)})
	= f_{\sigma(r)},
\]
again using that \(\{e_{j}\}_{j=1}^{n}\) is orthogonal and \eqref{eqn:fsigmaj}, 
and this finishes the proof.
\end{proof}


\section{Subobjects, the bi-chain condition and \texorpdfstring{$\Hom$}{Hom}-finiteness}
\label{sec:subobjects-bichain-Hom-finite}

The goal of this section is to see that the endomorphism ring of any object in a $\Hom$-finite additive category (see \cref{def:Hom-finite}) is semi-perfect (see \cref{cor:Hom-finite-implies-endo-rings-are-all-semi-perfect}). This is shown via Atiyah's bi-chain conditions (see \cref{def:bi-chain}).

\begin{defn}
\cite[p.\ 539]{Krause-KS-cats-and-projective-covers} 
Suppose $\CB$ is an abelian category and let $X\in\CA$. 
Two monomorphisms \(a\colon X_{1} \into X\) and \(b\colon X_{2}\into X\) are \emph{equivalent} if there is an isomorphism \(c\colon X_{1}\overset{\iso}{\longrightarrow}X_{2}\)
such that \(bc=a\). This is an equivalence relation on the collection of monomorphisms in $\CB$ with codomain $X$, and
an equivalence class of this relation is called a \emph{subobject} of \(X\). By abuse of notation we just write \(a\colon X_{1} \into X\) for the equivalence class containing the monomorphism $a$. 

Given two subobjects \(A\overset{f}{\into}X\) and \(B\overset{g}{\into}X\)
of \(X\), we say that \(A\) is \emph{contained in} \(B\) (denoted \(A\subseteq B\))
if there is a (necessarily monic) morphism \(h\colon A\to B\) such that \(f=gh\).
In this way, we obtain a partial order on the collection of subobjects
of \(X\).
\end{defn}

We assume the following throughout this section.

\begin{setup}\label{setup:bi-chain}
We denote by $\CB$ an abelian category. 
We make the implicit assumption that the collection of
subobjects of an object \(X\in\CB\) is a set.
\end{setup}

\begin{rem}
The set-theoretic restriction in Setup~\ref{setup:bi-chain} is not so strong. 
For example, the condition is satisfied if $\CB$ has a generator 
(see Freyd \cite[Prop.\ 3.35]{Freyd-abelian-cats}), 
or if $\CB$ is skeletally small. 
In particular, the category of all (right) modules over a ring falls into Setup~\ref{setup:bi-chain}.
\end{rem}

We call \(X\in\CB\) \emph{simple} if $X\neq 0$ and 
its only subobjects are \(0\) and \(X\) itself (see \cite[p.\ 539]{Krause-KS-cats-and-projective-covers}). 
Given a subobject \(A\overset{f}{\into}X\), 
we denote by \(X/A\) the codomain of
the cokernel map \(\cok f\colon X\onto\Cok f\).

\begin{defn}
\label{def:finite-length-object-composition-series}
\cite[p.\ 547]{Krause-KS-cats-and-projective-covers}
An object \(X\in\CB\) is said to have 
\emph{finite length} if there is a finite chain (called a \emph{composition series})
\[
0=X_{0}\subseteq X_{1}\subseteq\cdots\subseteq X_{n-1}\subseteq X_{n}=X
\]
of subobjects of \(X\) such that each successive quotient \(X_{j+1}/X_{j}\)
is simple.
\end{defn}

Now we recall the bi-chain condition in an abelian category as introduced in \cite{Atiyah-KS-theorem-with-apps-to-sheaves}.

\begin{defn}
\cite[p.\ 310]{Atiyah-KS-theorem-with-apps-to-sheaves}, \cite[p.\ 546]{Krause-KS-cats-and-projective-covers} 
\label{def:bi-chain}
A \emph{bi-chain} in $\CB$ is a sequence of morphisms 
\begin{equation}\label{eqn:bi-chain}
(\begin{tikzcd}
X_n \arrow[two heads]{r}{\alpha_n} & X_{n+1}\arrow[hook]{r}{\beta_n}&X_n\end{tikzcd})_{n\geq 0}
\end{equation}
in \(\CB\), 
for which \(\alpha_{n}\) is epic and 
\(\beta_{n}\) is monic for all \(n\geq0\). 
An object \(X\in\CB\) is said to \emph{satisfy the bi-chain condition} 
if for any bi-chain \eqref{eqn:bi-chain} 
with \(X_{0}=X\), there exists \(N\geq0\) such that \(\alpha_{n},\beta_{n}\)
are isomorphisms \(\forall n\geq N\).
\end{defn}

If \(X\in\CB\) satisfies the bi-chain condition, 
then \(X\) is indecomposable if and only if \(\End_{\CB}(X)\) is local; 
see \cite[Prop.\ 5.4]{Krause-KS-cats-and-projective-covers}, also  \cite[Lem.\ 6]{Atiyah-KS-theorem-with-apps-to-sheaves}.

\begin{lem}
\label{lem:Kr15-lem-5-1-finite-length-implies-bi-chain-condition}
\emph{\cite[Lem.\ 5.1]{Krause-KS-cats-and-projective-covers}}
If \(X\in\CB\) has finite length, 
then \(X\) satisfies the bi-chain condition.
\end{lem}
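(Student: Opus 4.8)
The plan is to run a length argument: the length $\ell(M)$ of a finite-length subquotient $M$ of $X$ will serve as a monovariant along the bi-chain. First I would recall the finite-length theory for abelian categories developed under \cref{setup:bi-chain} (see \cite[\S 5]{Krause-KS-cats-and-projective-covers}): a subobject and a quotient of a finite-length object are again of finite length; the length function $\ell$ is well defined and additive on short exact sequences, so $\ell(M)=\ell(A)+\ell(M/A)$ whenever $A\into M$ is a subobject; and $\ell(M)=0$ if and only if $M$ is a zero object. In particular, a subobject or a quotient of a finite-length object $M$ has length at most $\ell(M)$.

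Second, fix a bi-chain \eqref{eqn:bi-chain} with $X_{0}=X$. By induction on $n$, every $X_{n}$ has finite length, since $X_{n+1}$ is a quotient of $X_{n}$ via the epimorphism $\alpha_{n}$. Moreover, for each $n\geq 0$ the object $X_{n+1}$ is simultaneously a quotient of $X_{n}$ (via $\alpha_{n}$) and a subobject of $X_{n}$ (via $\beta_{n}$), so $\ell(X_{n+1})\leq\ell(X_{n})$ by the first paragraph. Hence $(\ell(X_{n}))_{n\geq 0}$ is a non-increasing sequence of non-negative integers, and so it is eventually constant: there exists $N\geq 0$ with $\ell(X_{n})=\ell(X_{n+1})$ for all $n\geq N$.

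Third, I would show that this constancy forces $\alpha_{n}$ and $\beta_{n}$ to be isomorphisms for every $n\geq N$. Since $\alpha_{n}$ is epic, $X_{n+1}\iso X_{n}/\Ker\alpha_{n}$, so additivity gives $\ell(X_{n})=\ell(\Ker\alpha_{n})+\ell(X_{n+1})$; as $\ell(X_{n})=\ell(X_{n+1})$, we obtain $\ell(\Ker\alpha_{n})=0$, hence $\Ker\alpha_{n}=0$ and $\alpha_{n}$ is also monic, so it is an isomorphism (a morphism in an abelian category that is both monic and epic is invertible). Dually, $\beta_{n}$ monic gives $\ell(X_{n})=\ell(X_{n+1})+\ell(X_{n}/X_{n+1})$, so $\Cok\beta_{n}=0$, whence $\beta_{n}$ is epic and therefore an isomorphism. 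The only non-formal ingredient is the finite-length theory invoked in the first paragraph (this is where \cref{setup:bi-chain} and Jordan--H\"{o}lder enter); granting that, the argument reduces to the fact that a non-increasing sequence in $\BN$ stabilises together with the rigidity observation above, so I expect no serious obstacle.
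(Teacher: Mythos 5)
Your argument is correct. The paper does not actually prove this lemma --- it defers entirely to \cite[Lem.\ 5.1]{Krause-KS-cats-and-projective-covers} --- and the length-monovariant argument you give (the lengths $\ell(X_{n})$ are non-increasing since $X_{n+1}$ is a quotient of $X_{n}$, hence stabilise, and equality of lengths together with additivity of $\ell$ on short exact sequences forces $\Ker\alpha_{n}=0$ and $\Cok\beta_{n}=0$, so both maps become isomorphisms) is precisely the proof in that reference, with the Jordan--H\"{o}lder facts you isolate in your first paragraph being the only non-formal input.
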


\begin{defn}
\label{def:Hom-finite}
\cite[p.\ 547]{Krause-KS-cats-and-projective-covers} 
We call a category \(\CA\) a 
\emph{\(\Hom\)-finite \(\ring\)-linear category} 
if \(\ring\) is a commutative ring, 
such that 
\(\CA\) is \(\ring\)-linear and for which \(\Hom_{\CA}(X,Y)\)
is a finite length \(\ring\)-module for all \(X,Y\in\CA\). 
If the ring $\ring$ or its existence is understood, then we more simply say that $\CA$ is \emph{$\Hom$-finite}. 
\end{defn}

Any object \(X\) of a $\Hom$-finite abelian category 
satisfies the bi-chain condition; 
see \cite[Lem.\ 5.2]{Krause-KS-cats-and-projective-covers}.

\begin{thm}
\label{thm:Kr15-thm-5-5-object-satisfying-bi-chain-has-KRS-decomposition}
\emph{\cite[Thm.\ 5.5]{Krause-KS-cats-and-projective-covers}}, \emph{\cite[Lem.\ 4]{Atiyah-KS-theorem-with-apps-to-sheaves}} 
Suppose \(X\in\CB\) satisfies the bi-chain condition.
Then \(X\) admits a finite direct sum decomposition 
\(X = X_{1}\oplus\cdots\oplus X_{n}\), 
with \(\End_{\CB}(X_{j})\) local for all \(1\leq j\leq n\).
\end{thm}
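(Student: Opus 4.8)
The plan is to deduce the statement from the principle recorded just after \cref{def:bi-chain}---that an object satisfying the bi-chain condition is indecomposable if and only if its endomorphism ring is local---by using the bi-chain condition itself to forbid decomposing $X$ into direct summands indefinitely. The one preliminary I would need is that the bi-chain condition passes to direct summands. Indeed, if $X = Y\oplus Z$ and $(Y_{n}\xrightarrow{\alpha_{n}}Y_{n+1}\xrightarrow{\beta_{n}}Y_{n})_{n\geq 0}$ is a bi-chain with $Y_{0}=Y$, then padding with $\id{Z}$ produces a bi-chain $(Y_{n}\oplus Z\xrightarrow{\alpha_{n}\oplus\id{Z}}Y_{n+1}\oplus Z\xrightarrow{\beta_{n}\oplus\id{Z}}Y_{n}\oplus Z)_{n\geq 0}$ with first term $X$; by hypothesis its maps are isomorphisms from some index on, and $\alpha_{n}\oplus\id{Z}$ (respectively $\beta_{n}\oplus\id{Z}$) is an isomorphism if and only if $\alpha_{n}$ (respectively $\beta_{n}$) is---as one sees by computing kernels and cokernels of a direct sum of morphisms---so the original bi-chain stabilises too.

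The main step is to show that $X$ is a finite direct sum of indecomposable objects; the conclusion then follows at once, since each such summand is a direct summand of $X$, hence satisfies the bi-chain condition, hence has a local endomorphism ring by the principle quoted above. If $X = 0$ we take the empty decomposition (\cref{rem:zero-object-has-KRS-decomp}), so suppose $X\neq 0$ and, for a contradiction, that $X$ is \emph{not} a finite direct sum of indecomposables. Then $X$ is not indecomposable (an indecomposable object is its own one-term decomposition), so there is a direct sum decomposition $X = A_{1}\oplus B_{1}$ with $A_{1},B_{1}\neq 0$; at least one of $A_{1},B_{1}$ is again not a finite direct sum of indecomposables (else $X$ would be), and after relabelling I may assume $B_{1}$ is not---note $B_{1}\neq 0$, since $0$ is the empty such sum. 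As $B_{1}$ is a direct summand of $X$ it inherits the bi-chain condition, so the same argument applies to it; iterating produces non-zero objects $A_{n}, B_{n}$ with $X = A_{1}\oplus\cdots\oplus A_{n}\oplus B_{n}$ for every $n\geq 1$. Setting $B_{0}\deff X$, the canonical projections $\alpha_{n}\colon B_{n}\onto B_{n+1}$ (arising from $B_{n} = A_{n+1}\oplus B_{n+1}$) and the canonical inclusions $\beta_{n}\colon B_{n+1}\into B_{n}$ form a bi-chain with first term $X$. By the bi-chain condition there is $N\geq 0$ with $\alpha_{N}$ an isomorphism; but a kernel of $\alpha_{N}$ is the summand inclusion $A_{N+1}\into B_{N}$ with $A_{N+1}\neq 0$, so $\alpha_{N}$, having a non-zero kernel, cannot be an isomorphism. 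This contradiction completes the argument.

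The only place demanding genuine care is the recursion in the middle paragraph: one must keep track of which summand continues to fail the finiteness property, and verify that the emerging projections and inclusions really do constitute a bi-chain that never stabilises. Everything else is either routine or quoted from \cite{Krause-KS-cats-and-projective-covers}. I note that, if one wished to avoid invoking the indecomposable-versus-local principle, the same non-termination idea can instead be deployed after first proving a Fitting-type lemma: the epi--mono factorisations of the successive powers of an endomorphism $\phi$ of an object $W$ satisfying the bi-chain condition assemble into a single bi-chain on $W$, from which one reads off that the descending chain $\im\phi\supseteq\im\phi^{2}\supseteq\cdots$ and the ascending chain $\Ker\phi\subseteq\Ker\phi^{2}\subseteq\cdots$ both stabilise, so $W = \Ker(\phi^{N})\oplus\im\phi^{N}$ for $N\gg 0$ with $\phi$ restricting to an automorphism of $\im\phi^{N}$; this is in essence how that principle is itself established.
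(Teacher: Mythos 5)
Your proof is correct. The paper does not reprove this statement---it is quoted from \cite[Thm.\ 5.5]{Krause-KS-cats-and-projective-covers} and \cite[Lem.\ 4]{Atiyah-KS-theorem-with-apps-to-sheaves}---and your argument (direct summands inherit the bi-chain condition via padding with an identity; an infinite strictly descending chain of non-zero complements would assemble into a bi-chain whose projections all have non-zero kernel and hence never become isomorphisms; the indecomposable-iff-local principle recorded after \cref{def:bi-chain} then finishes) is essentially the one given in those references.
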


Putting together the results above we derive the following.

\begin{cor}
\label{cor:Hom-finite-implies-endo-rings-are-all-semi-perfect}
If \(\CA\) is a \(\Hom\)-finite $\ring$-linear category, 
then \(\End_{\CA}(X)\) is semi-perfect for each \(X\in\CA\).
\end{cor}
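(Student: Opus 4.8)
The plan is to connect the endomorphism ring $\End_{\CA}(X)$ of an object in a $\Hom$-finite $\ring$-linear category to the category $\rproj{\Lambda}$ and to use the fact that objects in a $\Hom$-finite abelian category satisfy the bi-chain condition, together with \cref{thm:Kr15-thm-5-5-object-satisfying-bi-chain-has-KRS-decomposition} and \cref{prop:Kr15-prop-4-1-semi-perfect-equivalent-conditions}. Fix $X \in \CA$ and set $\Lambda \deff \End_{\CA}(X)$. By \cref{prop:Kr15-prop-4-1-semi-perfect-equivalent-conditions}, it suffices to produce a decomposition $\Lambda_{\Lambda} = P_{1} \oplus \cdots \oplus P_{n}$ of the right regular module into summands with local endomorphism rings; equivalently, it suffices to show that the object $\Lambda_{\Lambda}$ of $\rMod{\Lambda}$ satisfies the bi-chain condition, and then invoke \cref{thm:Kr15-thm-5-5-object-satisfying-bi-chain-has-KRS-decomposition} applied in the abelian category $\CB = \rMod{\Lambda}$.

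First I would observe that $\rMod{\Lambda}$ falls into \cref{setup:bi-chain}, since the category of all right modules over a ring has, up to isomorphism, a set of subobjects of any given module (as remarked after \cref{setup:bi-chain}). So \cref{thm:Kr15-thm-5-5-object-satisfying-bi-chain-has-KRS-decomposition} and the preceding lemmas are applicable there. The crux is then to verify that $\Lambda_{\Lambda}$ satisfies the bi-chain condition as an object of $\rMod{\Lambda}$. By \cref{lem:Kr15-lem-5-1-finite-length-implies-bi-chain-condition}, it is enough to show $\Lambda_{\Lambda}$ has finite length as a $\Lambda$-module. Here is where $\Hom$-finiteness enters: by hypothesis $\Lambda = \Hom_{\CA}(X,X)$ is a finite length module over the commutative ring $\ring$, and $\ring$ acts on $\Lambda$ centrally (via the $\ring$-linear structure), so any $\Lambda$-submodule of $\Lambda_{\Lambda}$ is in particular an $\ring$-submodule. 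Hence a strictly increasing chain of $\Lambda$-submodules of $\Lambda_{\Lambda}$ is a strictly increasing chain of $\ring$-submodules, which must terminate; thus $\Lambda_{\Lambda}$ is both Noetherian and Artinian as a $\Lambda$-module, and therefore of finite length.

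Once $\Lambda_{\Lambda}$ has finite length, \cref{lem:Kr15-lem-5-1-finite-length-implies-bi-chain-condition} gives that it satisfies the bi-chain condition in $\rMod{\Lambda}$, and then \cref{thm:Kr15-thm-5-5-object-satisfying-bi-chain-has-KRS-decomposition} yields a finite decomposition $\Lambda_{\Lambda} = P_{1} \oplus \cdots \oplus P_{n}$ with each $\End_{\rMod{\Lambda}}(P_{j})$ local. This is precisely condition \ref{item:KRS-decomp-of-Lambda} of \cref{prop:Kr15-prop-4-1-semi-perfect-equivalent-conditions}, so $\Lambda = \End_{\CA}(X)$ is semi-perfect. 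Since $X$ was arbitrary, we are done.

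The main obstacle, such as it is, is the bookkeeping around which abelian category one is working in: \cref{thm:Kr15-thm-5-5-object-satisfying-bi-chain-has-KRS-decomposition} and the bi-chain machinery are stated for the abstract category $\CB$ of \cref{setup:bi-chain}, whereas $\CA$ itself need not be abelian, so one must be careful to run the argument entirely inside $\rMod{\Lambda}$ rather than in $\CA$. The genuinely substantive point — that $\Hom$-finiteness forces $\End_{\CA}(X)$ to have finite length as a module over itself — is the passage from the $\ring$-module length of $\Lambda$ to the $\Lambda$-module length of $\Lambda_{\Lambda}$, which works because the $\ring$-action is through the centre and hence every one-sided $\Lambda$-submodule is an $\ring$-submodule; this is where the full strength of the $\Hom$-finite hypothesis (a finite length module over a fixed commutative ring, not merely finitely generated) is used.
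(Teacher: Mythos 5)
Your proof is correct and follows essentially the same route as the paper's: pass from the finite $\ring$-length of $\Lambda \deff \End_{\CA}(X)$ to finite length of $\Lambda_{\Lambda}$ in $\rMod{\Lambda}$, invoke \cref{lem:Kr15-lem-5-1-finite-length-implies-bi-chain-condition} and \cref{thm:Kr15-thm-5-5-object-satisfying-bi-chain-has-KRS-decomposition} to get a decomposition into summands with local endomorphism rings, and conclude via \cref{prop:Kr15-prop-4-1-semi-perfect-equivalent-conditions}. Your explicit justification that every right $\Lambda$-submodule of $\Lambda_{\Lambda}$ is an $\ring$-submodule (because $\ring$ acts through the centre) is a detail the paper leaves implicit in the phrase ``and hence also a finite length object in $\rMod{\Lambda_{X}}$''.
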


\begin{proof}
If \(\CA\) is a \(\Hom\)-finite $\ring$-linear category, then \(\Lambda_{X}\deff\End_{\CA}(X)\) is
a finite length \(\ring\)-module. 
That is,
\(\Lambda_{X}\) is a finite length object in the abelian category \(\rMod{k}\), 
and hence also a finite length object in \(\rMod{\Lambda_{X}}\). 
Therefore, by
\cref{lem:Kr15-lem-5-1-finite-length-implies-bi-chain-condition} we know 
\(\Lambda_{X}\) satisfies the bi-chain
condition in \(\rMod{\Lambda_{X}}\). 
This implies that \(\Lambda_{X}\) admits a decomposition
into a finite direct sum of right $\Lambda_{X}$-modules with local endomorphism rings 
by \cref{thm:Kr15-thm-5-5-object-satisfying-bi-chain-has-KRS-decomposition},
which is precisely condition \ref{item:KRS-decomp-of-Lambda} of \cref{prop:Kr15-prop-4-1-semi-perfect-equivalent-conditions}. 
Hence, \(\Lambda_{X}\) is semi-perfect.
\end{proof}


\section{The main theorem}
\label{sec:main-theorem}

We are now in position to state and prove the theorem we have been building too. It is well-known and stated in several places, e.g.\ 
\cite[\S 2.2]{Ringel-tame-algebras-and-integral-quadratic-forms}, 
but we could not find a proof. 
The equivalence below is 
asserted in \cite[\S I.3.2]{Happel-triangulated-cats-in-rep-theory}  without an explicit $\Hom$-finiteness assumption, 
but we believe this may be in error. 
It was a desire to understand this that motivated this note.

We note that the equivalence of \ref{item:main-thm-KS-category} and \ref{item:main-thm-idempotents-split} in \cref{thm:main-theorem} follows from \cref{cor:Kr15-Cor-4-4-KS-category-iff-split-idems-and-semi-perfect-endo-rings} once we know each endomorphism ring arising from a $\Hom$-finite category is semi-perfect (see \cref{cor:Hom-finite-implies-endo-rings-are-all-semi-perfect}). 
We give a more pedestrian proof of 
\ref{item:main-thm-KS-category} implies \ref{item:main-thm-idempotents-split} below.

\begin{thm}
\label{thm:main-theorem}
Let \(\CA\) be a \(\Hom\)-finite \(\ring\)-linear category. 
Then the following are equivalent. 
\begin{enumerate}[label=\textup{(\roman*)}]
	\item\label{item:main-thm-KS-category} 
	\(\CA\) is a Krull-Schmidt category.
	\item\label{item:main-thm-idempotents-split} 
	$\CA$ has split idempotents.
	\item\label{item:main-thm-local-iff-indecomp} 
	For any object \(Y\in\CA\), the ring \(\End_{\CA}(Y)\) is local if and only if \(Y\) is indecomposable.
\end{enumerate}
Furthermore, in this case, an object $X\in\CA$ admits a Krull-Remak-Schmidt decomposition $X = X_{1}\oplus \cdots \oplus X_{n}$ in $\CA$ 
if and only if 
$\End_{\CA}(X)$ admits a complete set of primitive orthogonal idempotents of size $n$. 
\end{thm}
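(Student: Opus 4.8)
The plan is to first collapse the three-way equivalence to a single nontrivial implication, and then treat the final biconditional on its own. Since \(\CA\) is \(\Hom\)-finite, \cref{cor:Hom-finite-implies-endo-rings-are-all-semi-perfect} gives that \(\End_{\CA}(X)\) is semi-perfect for every \(X\in\CA\). Feeding this into \cref{cor:Kr15-Cor-4-4-KS-category-iff-split-idems-and-semi-perfect-endo-rings}, the clause ``\(\End_{\CA}(X)\) is semi-perfect for all \(X\)'' becomes automatic, so \ref{item:main-thm-KS-category}\(\;\Leftrightarrow\;\)\ref{item:main-thm-idempotents-split} is immediate. The implication \ref{item:main-thm-KS-category}\(\;\Rightarrow\;\)\ref{item:main-thm-local-iff-indecomp} is exactly \cref{lem:in-KS-category-indecomposable-iff-local-endo-ring}. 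Hence it remains only to prove \ref{item:main-thm-local-iff-indecomp}\(\;\Rightarrow\;\)\ref{item:main-thm-KS-category}, which closes the loop.

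For \ref{item:main-thm-local-iff-indecomp}\(\;\Rightarrow\;\)\ref{item:main-thm-KS-category} I would argue by strong induction on the \(\ring\)-length \(\ell_{\ring}(\End_{\CA}(X))\), which is finite by \(\Hom\)-finiteness, showing that every \(X\in\CA\) admits a Krull-Remak-Schmidt decomposition. If \(X=0\) the empty decomposition suffices (\cref{rem:zero-object-has-KRS-decomp}). If \(X\) is indecomposable, hypothesis \ref{item:main-thm-local-iff-indecomp} forces \(\End_{\CA}(X)\) to be local, so \(X\) is its own Krull-Remak-Schmidt decomposition. Otherwise \(X=X'\oplus X''\) with \(X',X''\) non-zero; decomposing \(\End_{\CA}(X)\) as the \(\ring\)-module direct sum of the four \(\Hom\)-groups between \(X'\) and \(X''\) and using additivity of length together with \(\ell_{\ring}(\End_{\CA}(X''))\geq 1\) (as \(\id{X''}\neq 0\)) gives \(\ell_{\ring}(\End_{\CA}(X'))<\ell_{\ring}(\End_{\CA}(X))\), and symmetrically for \(X''\). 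The induction hypothesis then yields Krull-Remak-Schmidt decompositions of \(X'\) and \(X''\) whose concatenation is one for \(X\). Observe that \ref{item:main-thm-local-iff-indecomp} is used only in the indecomposable base case---precisely the situation where an object cannot be broken up further and so must itself have a local endomorphism ring.

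For the final biconditional, assume the equivalent conditions hold and set \(\Lambda\deff\End_{\CA}(X)\), which is semi-perfect. For the forward direction, given a Krull-Remak-Schmidt decomposition \(X=X_{1}\oplus\cdots\oplus X_{n}\) with canonical inclusions \(i_{j}\) and projections \(p_{j}\), the elements \(e_{j}\deff i_{j}p_{j}\) form a complete set of orthogonal idempotents of \(\Lambda\), and each is primitive by \cref{lem:strong-indecomposable-gives-prim-idempotent} (applied with \(A=X_{j}\) and \(B=\bigoplus_{l\neq j}X_{l}\), using that \(\End_{\CA}(X_{j})\) is local). Conversely, given a complete set \(\{e_{j}\}_{j=1}^{n}\) of primitive orthogonal idempotents of \(\Lambda\), I would split each one---possible since \(\CA\) has split idempotents---and iterate \cref{prop:decomp-of-object-when-idem-splits} to reassemble \(X=X_{1}\oplus\cdots\oplus X_{n}\), where \(X_{j}\) realises the splitting of \(e_{j}\) and carries the corner ring as its endomorphism ring, \(\End_{\CA}(X_{j})\iso e_{j}\Lambda e_{j}\).

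The crux of the converse is upgrading primitivity of \(e_{j}\) to locality of the corner ring \(e_{j}\Lambda e_{j}\), so that \(X=X_{1}\oplus\cdots\oplus X_{n}\) is a genuine Krull-Remak-Schmidt decomposition. Primitivity of \(e_{j}\) alone only gives that its sole idempotents are \(0\) and \(e_{j}\) (\cref{lem:Ae-indecom-iff-e-primitive-iff-eAe-local}), which is strictly weaker than locality in general; semi-perfectness of \(\Lambda\) is what bridges the gap. Indeed, since \(\Lambda\) is semi-perfect, \(\rproj{\Lambda}\) is Krull-Schmidt (\cref{prop:Kr15-prop-4-1-semi-perfect-equivalent-conditions}) and \(e_{j}\Lambda\) is an indecomposable object therein (again \cref{lem:Ae-indecom-iff-e-primitive-iff-eAe-local}); hence \(e_{j}\Lambda e_{j}\iso\End_{\rMod{\Lambda}}(e_{j}\Lambda)\), via the isomorphism \eqref{eqn:isomorphism-Phi}, is local by \cref{lem:in-KS-category-indecomposable-iff-local-endo-ring}. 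I expect this locality upgrade, together with the bookkeeping that splitting a complete orthogonal family genuinely reassembles \(X\), to be the main technical points; the remaining steps are either direct citations or the clean length induction.
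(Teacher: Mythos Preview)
Your argument is correct, and it diverges from the paper's in two places. For the cycle of equivalences, the paper proves \ref{item:main-thm-KS-category}\(\Rightarrow\)\ref{item:main-thm-idempotents-split}\(\Rightarrow\)\ref{item:main-thm-local-iff-indecomp}\(\Rightarrow\)\ref{item:main-thm-KS-category}: it gives an explicit ``pedestrian'' proof of \ref{item:main-thm-KS-category}\(\Rightarrow\)\ref{item:main-thm-idempotents-split} (building the splitting of an arbitrary idempotent by hand using the conjugation of \cref{prop:semi-perfect-ring-the-complete-set-orthog-primitive-idems-is-essentially-unique}) rather than citing \cref{cor:Kr15-Cor-4-4-KS-category-iff-split-idems-and-semi-perfect-endo-rings} as you do; and for \ref{item:main-thm-local-iff-indecomp}\(\Rightarrow\)\ref{item:main-thm-KS-category} it inducts not on \(\ell_{\ring}(\End_{\CA}(X))\) but on the size \(n\) of a complete set of primitive orthogonal idempotents of \(\End_{\CA}(X)\), passing at each step through the functor \(H_{X}=\Hom_{\CA}(X,-)\colon\add{X}\to\rproj{\Lambda_{X}}\) and \cref{cor:Kr15-cor-4-3-reorder-decomposition} to track how the idempotents distribute over a nontrivial splitting \(X=Y_{1}\oplus Y_{2}\). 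Your \(\ring\)-length induction is more elementary---it uses only the four-block \(\ring\)-module decomposition of \(\End_{\CA}(X)\) and additivity of length, avoiding \(H_{X}\) and \cref{cor:Kr15-cor-4-3-reorder-decomposition} altogether. The trade-off is that the paper's induction on \(n\) yields the final biconditional as a by-product (the KRS decomposition produced has length exactly \(n\)), whereas your route requires the separate idempotent-splitting argument and the locality upgrade via semi-perfectness; that argument is fine, though the bookkeeping that iterating \cref{prop:decomp-of-object-when-idem-splits} on a complete orthogonal family genuinely reassembles \(X\) with \(\End_{\CA}(X_{j})\iso e_{j}\Lambda e_{j}\) should be written out once rather than gestured at.
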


\begin{proof}
Throughout this proof we use that, since $\CA$ is $\Hom$-finite, the endomorphism ring $\Lambda_{X} \deff \End_{\CA}(X)$ of each object $X\in\CA$ is semi-perfect by \cref{cor:Hom-finite-implies-endo-rings-are-all-semi-perfect}. 
Furthermore, this implies \(\rproj{\Lambda}_{X}\) is Krull-Schmidt by 
\cref{prop:Kr15-prop-4-1-semi-perfect-equivalent-conditions}.

\ref{item:main-thm-KS-category}
	$\Rightarrow$ \ref{item:main-thm-idempotents-split}\;\;
	Fix an object \(X\in\CA\). 
If \(X=0\) then any idempotent \(e\in\End_{\CA}(X)\)
is trivially split, so assume \(X\neq0\). 
Since $\CA$ is Krull-Schmidt, there is a Krull-Remak-Schmidt decomposition
$X = X_{1}\oplus \cdots \oplus X_{n}$ of $X$ in $\CA$. 
Consider the canonical projections \(p_{j}\colon X\onto X_{j}\) 
and inclusions \(i_{j}\colon X_{j}\into X\). 
Putting 
\(e_{j}\deff i_{j}p_{j}\) for each \(1\leq j\leq n\), we see that
\(\{e_{j}\}_{j=1}^{n}\) forms a complete set of orthogonal
idempotents of \(\Lambda_{X}\). 
Each idempotent $e_{j}$ is primitive by \cref{lem:strong-indecomposable-gives-prim-idempotent}, and hence 
\(\Lambda_{X}
	= e_{1}\Lambda_{X} \oplus\cdots\oplus e_{n}\Lambda_{X}
\) 
is a decomposition into indecomposable right $\Lambda_{X}$-modules.

Suppose \(e\colon X\to X\) is an idempotent morphism. 
If $e=0$, then it trivially splits, so we may assume $e\neq0$. 
By \cref{example:idempotents}\ref{item:1-e-is-idempotent}, we have 
\(
e \Lambda_{X} \oplus (\id{X}-e)\Lambda_{X}
	=\Lambda_{X}
	=\bigoplus_{j=1}^{n}e_{j}\Lambda_{X}
\).
Therefore, 
working in the Krull-Schmidt
category \(\rproj{\Lambda}_{X}\), 
we see that 
\(e \Lambda_{X}=\bigoplus_{j=1}^{t} e_{j} \Lambda_{X}\) 
and 
\((\id{X}-e)\Lambda_{X}=\bigoplus_{j=t+1}^{n} e_{j} \Lambda_{X}\)
for some \(1\leq t\leq n\) (possibly after reindexing) by \cref{cor:Kr15-cor-4-3-reorder-decomposition}. 
Therefore, 
we may express 
\(e = e_{1}r_{1}+\cdots+e_{t}r_{t}\) and 
\(1_{\Lambda_{X}} - e = \id{X} - e = e_{t+1}r_{t+1}+\cdots+e_{n}r_{n}\)
for some \(r_{j}\in\Lambda_{X}\), where $1\leq j\leq n$. 
We claim that 
\(\{g_{j}\deff e_{j}r_{j} \}_{j=1}^{n} \sse \Lambda_{X} \)
is a complete set of primitive orthogonal idempotents
and that 
\(g_{j}\Lambda_{X}=e_{j}\Lambda_{X}\). 
First, note that 
\(g_{j} \Lambda_{X} 
	= e_{j}r_{j}\Lambda_{X}
	\subseteq e_{j}\Lambda_{X}
\) for all $1\leq j\leq n$. 
Now fix $j\in\{1,\ldots, t\}$. 
If we have 
\(x  \in e_{j}\Lambda_{X}\subseteq e\Lambda_{X}\),
then it satisfies $ex = x$ and $(\id{X} - e)x = 0$. 
Hence, the identity 
\[
x 
	= 1_{\Lambda_{X}} \cdot x
	= e_{1}r_{1} x + \cdots + e_{n}r_{n} x
	= g_{1} x + \cdots + g_{n} x
\]
implies 
\(x = g_{j} x\) and \(g_{l} x=0\) 
for all \(l\neq j\), 
using \(\Lambda_{X}=\bigoplus_{j=1}^{n}e_{j}\Lambda_{X}\), 
as \(g_{l}x\in e_{l}\Lambda_{X}\) and \(x \in e_{j}\Lambda_{X}\).
In particular, this yields \(e_{j}\Lambda_{X}\subseteq g_{j}\Lambda_{X}\) and so \(g_{j}\Lambda_{X}= e_{j} \Lambda_{X}\) for $1\leq j \leq t$. 
Furthermore, we also see that 
\(g_{l}g_{j}=0\) for \(l\neq j\) and 
\(g_{j}^{2}=g_{j}\) by choosing $x=g_{j}$. 
A similar argument yields the same conclusions for $j\in\{t+1,\ldots,n\}$. 
Moreover, by \cref{lem:Ae-indecom-iff-e-primitive-iff-eAe-local} 
we deduce that 
\(g_{j}\) is primitive as 
\(g_{j}\Lambda_{X}=e_{j}\Lambda_{X}\) is indecomposable. 
Hence, 
\(\{g_{j}\}_{j=1}^{n}\)
is a set of primitive
orthogonal idempotents in \(\Lambda_{X}\), and it is clear that it is complete.

By \cref{prop:semi-perfect-ring-the-complete-set-orthog-primitive-idems-is-essentially-unique}
there exists an invertible element \(a\in\Lambda_{X}\) and a permutation \(\sigma\in\sym(n)\), such
that \(g_{\sigma(j)}=ae_{j}a^{-1}\) for all \(j=1,\ldots,n\). 
But, by inspecting 
the proof of \cref{prop:semi-perfect-ring-the-complete-set-orthog-primitive-idems-is-essentially-unique},
we observe that \(\sigma(j)=j\) for each $j$ 
as \(e_{j}\Lambda_{X}=g_{j}\Lambda_{X}\).
Therefore, \(g_{j}=ae_{j}a^{-1}\) 
for \(1\leq j\leq n\).
Define \(Y\deff X_{1}\oplus\cdots\oplus X_{t}\) 
and \(Z\deff X_{t+1}\oplus\cdots\oplus X_{n}\), 
then \(X=Y\oplus Z\). 
Define morphisms  
$p\deff(\, p_{1}\, \cdots \, p_{t}\,)^{T}a^{-1}\colon X\to Y$ 
and 
$i\deff a (\, i_{1}\, \cdots \, i_{t}\,) \colon Y\to X$. 
Then
\[
ip 
	= a (\, i_{1}\, \cdots \, i_{t}\,)\circ (\, p_{1}\, \cdots \, p_{t}\,)^{T}a^{-1}
	= \sum_{j=1}^{t} ai_{j}p_{j}a^{-1}
	= \sum_{j=1}^{t}ae_{j}a^{-1}
	= \sum_{j=1}^{t}g_{j}
	= e,
\]
and 
\(
pi
\)
is the $(t\times t)$-diagonal matrix with diagonal 
$(p_{1}i_{1}, \ldots, p_{t}i_{t}) = (\id{X_{1}},\ldots, \id{X_{t}})$, 
i.e.\ $pi = \id{Y}$. 
That is, we have shown \(e\) splits and hence $\CA$ has split idempotents.

\ref{item:main-thm-idempotents-split} 
	$\Rightarrow$ \ref{item:main-thm-local-iff-indecomp}\;\;
Let \(Y\in\CA\) be arbitrary and note \(\Lambda_{Y}=\End_{\CA}(Y) = \End_{\add{Y}}(Y)\). 
As \(\CA\) has split idempotents,  
there is an equivalence \(\add{Y}\simeq\rproj{\Lambda}_{Y}\) by \cref{prop:Kr15-Prop-2-3-hom-functor-induces-equivalence-to-projectives}. 
In particular, $\add{Y}$ is a Krull-Schmidt category.  
Using this, 
\cref{rem:addX-closed-under-direc-summands} 
and  
\cref{lem:in-KS-category-indecomposable-iff-local-endo-ring}, 
we know $Y$ is indecomposable in $\CA$, if and only if it is indecomposable in $\add{Y}$, if and only if $\Lambda_{Y}$ is local. 
	
\ref{item:main-thm-local-iff-indecomp}
	$\Rightarrow$ \ref{item:main-thm-KS-category}\;\;
Fix an object \(X\in\CA\). 
If \(X=0\) then it trivially has a Krull-Remak-Schmidt decomposition (see \cref{rem:zero-object-has-KRS-decomp}). 
Thus, assume \(X\neq0\). 
As \(\Lambda_{X} = \End_{\CA}(X)\) is semi-perfect, there is a direct sum decomposition 
\(
(\Lambda_{X})_{\Lambda_{X}} 
	= P_{1}\oplus\cdots\oplus P_{n}\) with \(\End_{\rMod{\Lambda_{X}}}(P_{j})
\)
local for each \(1\leq j\leq n\). 
By \cref{lem:semi-perfect-ring-admits-complete-set-orthog-primitive-idems}, there is a complete set $\{ f_{j} \}_{j=1}^{n}\sse \Lambda_{X}$ of primitive orthogonal idempotents, such that $P_{j} = f_{j}\Lambda_{X}$. 
As in \cref{prop:Kr15-Prop-2-3-hom-functor-induces-equivalence-to-projectives}, 
put $H_{X}(-) = \Hom_{\CA}(X,-)$ and recall that there is a ring isomorphism
$\End_{\CA}(X_{0}) \to \End_{\rMod{\Lambda_{X}}}(H_{X}(X_{0}))$ 
induced by $H_{X}(-)$ 
for each object $X_{0}\in\add{X}$.

We prove by induction on $n$ that $X$ admits a Krull-Remak-Schmidt decomposition 
$X = X_{1} \oplus \cdots \oplus X_{n}$
of length $n$ 
in $\CA$. 
If $n=1$, then $\Lambda_{X} = P_{1}$ has a local endomorphism ring. 
Then the ring isomorphism 
$
\End_{\CA}(X) 
	\iso \End_{\rMod{\Lambda_{X}}}(\Lambda_{X})
$ 
implies $\End_{\CA}(X)$ is local, so we set $X_{1}\deff X$ and we are done in this case.

Now suppose $n\geq 2$ and that the claim holds for positive integers $m<n$. 
If $X$ is indecomposable, then $\Lambda_{X} = \End_{\CA}(X)$ is local by assumption \ref{item:main-thm-local-iff-indecomp}. 
This would imply $1_{\Lambda}$ is primitive by \cref{lem:local-ring-has-0-1-idempotents-only}, 
and then in turn force $n=1$ by combining \cref{lem:semi-perfect-ring-admits-complete-set-orthog-primitive-idems} and \cref{prop:semi-perfect-ring-the-complete-set-orthog-primitive-idems-is-essentially-unique}, leading to a contradiction. 
Hence, $X = Y_{1} \oplus Y_{2}$ for some non-zero objects $Y_{j}\in\add{X}$. 
We have 
$
H_{X}(Y_{1})\oplus H_{X}(Y_{2})
	\iso H_{X}(X) 
	= \Lambda_{X} 
	= \bigoplus_{j=1}^{n}f_{j} \Lambda_{X}
$
in the Krull-Schmidt category \(\rproj{\Lambda}_{X}\).
Thus, 
\begin{equation}
\label{eqn:decomp-HY1}
H_{X}(Y_{1})=\bigoplus_{j=1}^{m_{1}}f_{j} \Lambda_{X}
\end{equation}
and 
\(H_{X}(Y_{2})=\bigoplus_{j=m_{1}+1}^{n}f_{j} \Lambda_{X}\) 
for some \(0\leq m_{1}\leq n\) 
(possibly after reindexing)
by \cref{cor:Kr15-cor-4-3-reorder-decomposition}. 
Since there is the non-zero canonical projection of $X$ onto its summand $Y_{j}$ for $j=1,2$, we must actually have that $1\leq m_{1} \leq n-1$. 
Define $f\deff f_{1} + \cdots + f_{m_{1}}$. 
Then
\begin{align*}
\Lambda_{Y_{1}} 
	&= \End_{\CA}(Y_{1}) \\
	&\iso \End_{\rproj{\Lambda_{X}}}(H_{X}(Y_{1})) &&\text{by \cref{prop:Kr15-Prop-2-3-hom-functor-induces-equivalence-to-projectives} since $Y_{1}\in\add{X}$}\\
	&= \End_{\rMod{\Lambda_{X}}}(H_{X}(Y_{1}))  &&\text{as $\rproj{\Lambda_{X}}\sse\rMod{\Lambda_{X}}$ is a full subcategory}\\
	&= \End_{\rMod{\Lambda_{X}}}\left(\bigoplus_{j=1}^{m_{1}}f_{j} \Lambda_{X}\right) &&\text{using \eqref{eqn:decomp-HY1}}\\
	&\iso f\Lambda_{X}f&&\text{using \eqref{eqn:isomorphism-Phi},}
\end{align*}
where the two isomorphisms are ring isomorphisms. 
It is straightforward to check that $\{ f_{j} \}_{j=1}^{m_{1}}\sse f\Lambda_{X}f$ is a complete set of primitive orthogonal idempotents, 
and hence $\Lambda_{Y_{1}}$ also admits a complete set of primitive orthogonal idempotents of size $m_{1}$. 
Similarly, $\Lambda_{Y_{2}}$ has a complete set of primitive orthogonal idempotents of size $m_{2}\deff n - m_{1}$, where $1\leq m_{2} \leq n-1$. 
Therefore, we can apply our induction hypothesis to $Y_{1}$ and $Y_{2}$, which produces a Krull-Remak-Schmidt decomposition 
$X 
	= Y_{1}\oplus Y_{2} 
	= X_{1}\oplus \cdots \oplus X_{m_{1}} 
	\oplus X'_{1}  \oplus \cdots \oplus X'_{m_{2}}
$
of $X$ in $\CA$ of length $m_{1} + m_{2} = n$. 
\end{proof}


{\setstretch{1}\begin{acknowledgements}
This work began during my Ph.D.\ at University of Leeds. 
I would like to thank Bethany R.\ Marsh, whose guidance and patience at that time helped me understand the content of this note. 
I am very grateful to Raphael Bennett-Tennenhaus who made me aware of the contributions of R.\ E.\ Remak to the theory of direct sum decompositions and who gave me valuable comments. 
I also thank Xiao-Wu Chen, Alberto Facchini and Henning Krause for helpful discussions and email communications in the preparation of this note. 

During the preparation of this note I have been financially supported by the following, for which I am also grateful:
a University of Leeds 110 Anniversary Research Scholarship, 
the Engineering and Physical Sciences Research Council (grant EP/P016014/1), 
an Early Career Fellowship from the London Mathematical Society with support from the Heilbronn Institute for Mathematical Research (grant ECF-1920-57), 
a DNRF Chair from the Danish National Research Foundation (grant DNRF156), 
a Research Project 2 from the Independent Research Fund Denmark (grant 1026-00050B), 
the Aarhus University Research Foundation (grant AUFF-F-2020-7-16). 
\end{acknowledgements}}

\newpage
{\setstretch{1}\bibliographystyle{mybstwithlabels}
\bibliography{references}}
\end{document}